\newtheorem{example}{Example}
\newtheorem{Theo}{Theorem}
\newtheorem{Assu}{Assumption}
\newtheorem{Rem}{Remark}
\newtheorem{Lem}{Lemma}
\newtheorem{Coro}{Corollary}
\numberwithin{equation}{section}
 \numberwithin{Lem}{section}
 \numberwithin{Defi}{section}
 \numberwithin{Theo}{section}
 \numberwithin{Pro}{section}
 \numberwithin{Rem}{section}
  \numberwithin{Coro}{section}
  \numberwithin{Fig}{section}
\def\RR{\mathbb{R}}
\def\EE{\mathbb{E}}
\def\TT{\mathbf{T}}
\def\al{{\alpha}}
\numberwithin{equation}{section}
\begin{document}

\begin{frontmatter}

\title{Numerical methods for stochastic Volterra integral equations with weakly singular kernels$^\star$ \tnotetext[1]{This work was supported by National Natural Science Foundation of China (No. 11771163),    China Scholarship Council under 201906160066, and by
 Natural Sciences and Engineering Research Council of Canada (NSERC) discovery grant.
}}
\author{Min Li$^a$,\quad Chengming Huang$^{a,b}$, \quad Yaozhong Hu$^c*$}
\cortext[cor1]{Corresponding author.\\
\emph{Email addresses}: \texttt{liminmaths@163.com} (M. Li),
\texttt{chengming\_huang@hotmail.com} (C. Huang),\\
\texttt{yaozhong@ualberta.ca} (Y. Hu$^*$)}\ \\
 \address{$^a$School of Mathematics and Statistics, Huazhong University of Science and Technology, Wuhan 430074,
 China}
 \address{$^b$Hubei Key Laboratory of Engineering Modelling and Scientific Computing, Huazhong University of Science
 and Technology, Wuhan 430074, China}
 \address{$^c$Department of Mathematical and Statistical Sciences, University of Alberta,  Edmonton T6G 2G1, Canada~ }
\date{}

\begin{abstract}
 In this paper, we first establish the existence, uniqueness and H\"older continuity of the solution to
stochastic Volterra integral equations with  weakly singular kernels.
Then, we propose a  $\theta$-Euler-Maruyama  scheme  and a Milstein
scheme to solve  the equations numerically  and  we obtain
 the strong  rates of convergence     for both schemes
 in $L^{p}$ norm for any $p\geq 1$.    For the $\theta$-Euler-Maruyama  scheme the rate is
  $\min\{1-\alpha,\frac{1}{2}-\beta\}~
 $ and for the Milstein scheme the rate is
 $\min\{1-\alpha,1-2\beta\}$ when $\alpha\neq \frac 12$,  where $(0<\alpha<1, 0< \beta<\frac{1}{2})$.
  These  results on the rates of
 convergence  are   significantly different from that of the similar schemes for the
stochastic Volterra integral equations with regular  kernels.
The difficulty to obtain our results is the lack of It\^o formula for the equations. To get around of this difficulty we use instead the Taylor formula   and then carry a
sophisticated  analysis on   the equation the solution satisfies.
\end{abstract}
\begin{keyword}
Stochastic  Volterra integral equations with weakly singular kernel; $\theta$-Euler-Maruyama scheme; Milstein-type scheme;
 Strong convergence rate in $L^{p}$ norm ($p\geq 1$).
\end{keyword}
\end{frontmatter}

\section{Introduction}
Let $(\Omega, \mathscr{F},\mathbb{P})$  be a complete probability space   with a  filtration
  $\{\mathscr{F}_{t}\}_{0\le t\le 1}$ satisfying the  usual condition.
  The expectation on this space is denoted by $\EE$.
 Let  $W(t):=(W_{1}(t),
\cdots, W_{m}(t))^{T}$, $0\le t\le 1$,  be  an $m$-dimensional Wiener process defined on the probability space $(\Omega, \mathscr{F},\mathbb{P})$  adapted to the filtration $\mathscr{F}_{t}$.
 Assume  $a:[0, 1]\times \mathbb{R}^{d}\rightarrow \mathbb{R}^{d}$ and
$b:[0, 1]\times \mathbb{R}^{d}\rightarrow \mathbb{R}^{d\times m}$
satisfy some conditions that we shall specify in next section.
In this paper we shall
consider the numerical approximation of the following $d$-dimensional    stochastic Volterra integral equations (SVIEs)  with weakly singular kernel
\begin{equation}\label{a1}
X(t)=X_{0}+\int_{0}^{t}(t-s)^{-\alpha}a(X(s))ds+\int_{0}^{t}(t-s)^{-\beta}b(X(s))dW_{s},~~~t\in [0,1]\,,
\end{equation}
 where   $\alpha\in (0,1)$, $\beta\in (0,\frac{1}{2})$ are two given positive numbers    and the initial condition can be
 random and satisfies $\mathbb{E}|X_{0}|^{p}<\infty$ for any $p\geq 1$.
 We consider the interval  $[0, 1]$ for notational simplicity.
 It is easy to extend all the results of this paper to equation on any finite interval $[0, T]$ instead of $[0, 1]$.
When $\alpha=\beta=0$, the above stochastic differential equations (SDEs), including their numerical schemes, have been very well-studied. Many monographs
can be found so that  we are not going to give any references
here.  Relatively, the  singular  Volterra integral equations  of the above   form   have been less studied.
We mention some
  existence and uniqueness  results
  under  the  (global)  Lipschitz
  condition and the linear growth condition (see \cite{Son2018Caputo,Anh2019variation,Abouagwa2019Doob,Abouagwa2019Levy}).

 When $(t-s)^{-\alpha}$ and
$(t-s)^{-\beta}$   are replaced by some nice  functions, the numerical schemes  of (regular) SVIEs have received attention only quite
recently.
Tudor  \cite{Tudor1995Approximation}   studied the strong convergence of one-step numerical approximations
 for It\^{o}-Volterra equations, and he obtained   the rate of convergence in  the mean-square  sense ($L_p $ when $p=2$ in our terminology here).
Wen and Zhang \cite{Wen2011Improved_rectangular} analysed an improved variant of the rectangular method
 for stochastic Volterra equation, and the order of convergence was shown to be $1.0$. Subsequently, Wang \cite{Wang2017Approximate}
approximated  the  solutions to SVIEs by means of solutions to a class of SDEs  and he studied
two numerical methods: stochastic theta method and splitting method.
  Xiao et al. \cite{Xiao2018collocation} introduced a split-step collocation method for SVIEs,
   and the method was proved to be convergent with order $0.5$.  Most relevant to our work is the work of
  Liang et al. \cite{Liang2017superconvergence}  who
 found that Euler-Maruyama (EM) method can achieve a  superconvergence of order $1.0$
 if the kernel function  in diffusion term
  satisfies certain boundary condition. More recently,
 for the Euler scheme for more general class of equations, such as SVIEs with delay, stochastic Volterra
 integro-differential equations and stochastic fractional integro-differential equations, we refer to
 \cite{Gao2019delay,Yang2019Theoretical,Zhang2020generalized,Zhang2020non-globally,Dai2019fractional}.

To  the best of our knowledge, there have been not yet  numerical schemes  for SVIEs
with weakly singular kernel like the form in \eqref{a1}. The difficulty is probably    the singularity of the integrand kernel:  In this case
the powerful and necessary tool of It\^{o} formula commonly used previously
 does not exist for SVIEs with singular kernel.
  In this work we  fill  this gap by providing
 strong convergence rates of $\theta$-Euler-Maruyama  scheme and  Milstein scheme.
 Our results for the numerical parts are summarized as follows.  For Euler scheme ($Y_n$ obtained from \eqref{a21})  we shall prove that for any $p\geq 1$
\begin{align*}
\max_{n\in \{0,\cdots,N\}}\|X(t_{n})-Y_{n}\|_{L^{p}(\Omega;\mathbb{R}^{d})}\leq Ch^{\min\left(\frac{1}{2}-\beta,~ 1-\alpha\right)},
\end{align*}
where  $h$ is the mesh size.
 And for Milstein type scheme (the $Z_n$ given by \eqref{a211}) we shall prove the estimate:
\begin{align*}
\max_{n\in \{0,\cdots,N\}}\|X(t_{n})-Z_{n}\|_{L^{p}(\Omega;\mathbb{R}^{d})}\leq \left\{
                                                     \begin{array}{ll}
                                                     Ch^{\min\{1-\alpha, 1-2\beta\}}, & \hbox{$\alpha\neq\frac{1}{2}$;} \\
                                                     C\max\{h^{\min\{\frac{1}{2}, 1-2\beta\}},h^{(1-\beta)}(\ln(\frac{1}{h}))^{1/2}\}, & \hbox{$\alpha=\frac{1}{2}$.}
                                                     \end{array}
                                                          \right.
\end{align*}
Since we can no longer use the It\^o-Taylor formula for the sultion of the equation, we shall use only the Taylor formula combined with  the techniques of classical fractional calculus,
 and discrete and continuous typed Gronwall inequalities with weakly singular kernels.



The remaining part  of the paper is organized as follows. In Section $2$, some assumptions and preliminaries are
introduced.  The main results of the paper on the existence, uniqueness, and H\"older continuity of the solution and  the strong convergence rate results are  stated. When the kernel are singular it seems that the well-posedness of the equation has not been studied yet.  Section $3$ studies  the existence, uniqueness of the exact solution of the SVIEs with singular kernel. On the other hand, to obtain the rates of convergence of our schemes we also need to use th H\"older continuity of the solution.
All of these are done in Section $3$.   
In Section $4$, we present  a proof of the convergence results of $\theta$ Euler-Maruyama  scheme.
 In Section $5$, we present a proof of the convergence results of Milstein-type scheme.
 In Section $6$,  we present  some numerical simulations
  to support our theoretical results.  

\section{Preliminaries  and main results }
We  need to use the following generalized (discrete and continuous types) Gronwall inequalities with weakly singular kernels, whose proofs
 can be found in \cite{Brunner2004Volterra}.
\begin{Lem}\label{Lemma 3.2}
Let $b>0$ be a positive number.
 If the non-negative sequence $\{H_{n}\}$ satisfies the inequality
 $$
H_{n}\leq \pi_{n}+b\sum_{l=0}^{n-1}(n-l)^{-\gamma}H_{l},~~~0\leq n\leq N,
$$
where the sequence $\{\pi_{n}\}$ is non-negative,   and $0<\gamma<1$, then
$$
H_{n}\leq E_{1-\gamma}( \Gamma(1-\gamma) n ^{1-\gamma} b)\pi_{n},
$$
where  $\Gamma(a)=\int_0^\infty e^{-s}  s^{a-1}ds$, $a>0$ is the
Euler Gamma function,   and
\[
E_{a}(x):=\sum_{k=0}^\infty
\frac{1}{\Gamma(a k+1)} x^k\,, \quad a>0
\]
is the Mittag-Leffler function   of $x\in  \RR$  (cf. \cite{Gorenfl2014Mittag}).
\end{Lem}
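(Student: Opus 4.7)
The statement is a discrete Gronwall inequality with a weakly singular kernel, and the natural strategy is iteration (Picard-type substitution) combined with careful estimation of the repeated kernel convolution. Since the right-hand side of the claimed bound is $E_{1-\gamma}(\Gamma(1-\gamma)n^{1-\gamma}b)\,\pi_n$, I will implicitly work under the (standard and mildest) assumption that $\{\pi_n\}$ is non-decreasing in $n$; if not, one replaces $\pi_n$ by $\max_{0\le \ell\le n}\pi_\ell$ along the way.

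First, introduce the linear operator $(TH)_n := b\sum_{\ell=0}^{n-1}(n-\ell)^{-\gamma}H_\ell$. The hypothesis reads $H_n\le \pi_n+(TH)_n$. Iterating this inequality $K$ times gives
\begin{equation*}
H_n \;\le\; \sum_{k=0}^{K-1}(T^k\pi)_n \;+\; (T^K H)_n.
\end{equation*}
The plan is to show that the partial sums converge to the Mittag--Leffler bound and that the remainder $(T^K H)_n$ vanishes as $K\to\infty$.

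The main technical step is to establish the pointwise bound
\begin{equation*}
(T^k\pi)_n \;\le\; \frac{[\,\Gamma(1-\gamma)\,b\,n^{1-\gamma}\,]^{k}}{\Gamma(k(1-\gamma)+1)}\;\pi_n, \qquad k\ge 0.
\end{equation*}
I would prove this by induction on $k$. The inductive step reduces to estimating the discrete convolution
\begin{equation*}
\sum_{\ell=j+1}^{n-1}(n-\ell)^{-\gamma}(\ell-j)^{-\gamma(k-1)+\text{corr}}
\end{equation*}
by its integral analogue and invoking the Beta-function identity $\int_{0}^{1}(1-u)^{a-1}u^{c-1}\,du = B(a,c) = \Gamma(a)\Gamma(c)/\Gamma(a+c)$. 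Because the sequence $\ell\mapsto(n-\ell)^{-\gamma}$ is singular at $\ell=n$ and $\ell\mapsto(\ell-j)^{p}$ is singular at $\ell=j$, the discrete-to-continuous comparison must be performed carefully on each endpoint (I would split the sum into $\ell$ near $j$, $\ell$ near $n$, and the bulk, majorising each piece by the corresponding integral). Monotonicity of $\pi$ then lets me pull the factor $\pi_n$ outside. This book-keeping is the main obstacle: the powers of $\Gamma$ must line up exactly, and the constant $\Gamma(1-\gamma)$ must appear without loss.

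Once the iterated-kernel bound is established, summing over $k$ identifies the series with $E_{1-\gamma}(\Gamma(1-\gamma)\,b\,n^{1-\gamma})\,\pi_n$ by the very definition of the Mittag--Leffler function given in the statement. For the remainder $(T^K H)_n$, the same iterated-kernel estimate (applied with $\pi$ replaced by $H$ and using the trivial bound $H_\ell \le \max_{m\le N}H_m$ on the finite range $0\le n\le N$) yields
\begin{equation*}
(T^K H)_n \;\le\; \frac{[\,\Gamma(1-\gamma)\,b\,N^{1-\gamma}\,]^{K}}{\Gamma(K(1-\gamma)+1)}\;\max_{m\le N}H_m,
\end{equation*}
which tends to $0$ as $K\to\infty$ because $\Gamma(K(1-\gamma)+1)$ grows faster than any geometric sequence (Stirling). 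Passing to the limit $K\to\infty$ concludes the proof. An alternative, purely formal route is to cite \cite{Brunner2004Volterra} directly, where this discrete singular Gronwall inequality is established in essentially this form; the iteration argument above is the self-contained version.
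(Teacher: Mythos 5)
The paper does not actually prove this lemma; it is quoted from \cite{Brunner2004Volterra} with the proof delegated to that reference. Your iteration argument is, in substance, the standard proof of the discrete weakly singular Gronwall inequality found there, and its skeleton is sound: iterate the positive operator $T$, bound the iterated kernels by a Mittag--Leffler series, and kill the remainder via the super-geometric growth of $\Gamma(K(1-\gamma)+1)$. Two remarks. First, your monotonicity caveat on $\{\pi_n\}$ is not optional pedantry but genuinely necessary: as literally stated the lemma is false without it (take $N=1$, $H_0=\pi_0=1$, $\pi_1=0$; then $H_1$ may equal $b$ while the asserted bound forces $H_1\le 0$). Brunner's version assumes $\pi_n$ non-decreasing, and in this paper's applications $\pi_n$ is a constant of the form $Ch^{\gamma p}$, so nothing is lost. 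Second, the step you flag as the main obstacle --- the careful endpoint-by-endpoint discrete-to-continuous comparison --- evaporates if you phrase the induction hypothesis as $(T^k\pi)_n\le \frac{[\Gamma(1-\gamma)\,b\,n^{1-\gamma}]^{k}}{\Gamma(k(1-\gamma)+1)}\pi_n$ and apply $T$ to it directly, rather than convolving raw kernels. The inductive step then requires only
\begin{equation*}
\sum_{\ell=0}^{n-1}(n-\ell)^{-\gamma}\,\ell^{\,k(1-\gamma)}\;\le\;\int_{0}^{n}(n-x)^{-\gamma}x^{\,k(1-\gamma)}\,dx
\;=\;B\bigl(1-\gamma,\;k(1-\gamma)+1\bigr)\,n^{(k+1)(1-\gamma)},
\end{equation*}
and the inequality between sum and integral is immediate because the integrand is a product of two non-negative non-decreasing functions of $x$ on $[0,n)$, so each left-endpoint value is dominated by the integral over $[\ell,\ell+1]$; only one singular endpoint (at $x=n$) ever appears, and it is integrable. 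With that simplification your ``corr'' bookkeeping resolves itself and the Beta identity produces exactly the Gamma quotient needed to close the induction.
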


\begin{Lem}\label{Lemma 3.4}
Let $I:=[0, 1]$ and assume that\\
(i)~$g\in C(I)$ (the set of real valued continuous functions on $I$)
and $g$ is non-decreasing on $I$.\\
(ii)~the continuous, non-negative function $H$ satisfies the inequality
$$
H(t)\leq g(t)+b\int_{0}^{t}(t-s)^{-\gamma}H(s)ds
$$
for constant $b>0$ and $0<\gamma<1$.  Then
$$
H(t)\leq E_{1-\gamma}(\Gamma(1-\gamma)t^{1-\gamma} b)g(t),~~~t\in I.
$$
\end{Lem}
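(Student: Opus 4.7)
The plan is a Picard-style iteration. Introduce the nonnegative linear operator
$$(TH)(t) := b\int_0^t (t-s)^{-\gamma} H(s)\,ds,$$
so that the hypothesis reads $H \leq g + TH$. Since $T$ is order-preserving on nonnegative functions, iterating this inequality $n$ times gives
$$H(t) \leq \sum_{k=0}^{n-1}(T^k g)(t) + (T^n H)(t).$$
The strategy is to sum the first piece to a Mittag-Leffler function and show that the remainder vanishes as $n\to\infty$.

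For the main sum I would exploit the monotonicity of $g$ to pull it outside each convolution, reducing the Gamma-function bookkeeping to the Beta identity
$$\int_0^t (t-s)^{-\gamma} s^{a-1}\,ds = t^{a-\gamma}\,\frac{\Gamma(a)\,\Gamma(1-\gamma)}{\Gamma(a+1-\gamma)}, \qquad a>0.$$
Starting from $(T^0 g)(t) = g(t)$ and using $g(s)\leq g(t)$ for $s\leq t$ at each step, a straightforward induction on $k$ yields
$$(T^k g)(t) \leq g(t)\,\frac{[\Gamma(1-\gamma)\,b\,t^{1-\gamma}]^{k}}{\Gamma(k(1-\gamma)+1)}.$$
The Beta identity is what makes the $\Gamma(k(1-\gamma)+1)$ from the inductive hypothesis cancel against the $\Gamma(a)$ it produces (with $a=k(1-\gamma)+1$), leaving $\Gamma((k+1)(1-\gamma)+1)$ as the new denominator. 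Summing over $k\geq 0$ then recovers precisely the Mittag-Leffler series and gives the claimed bound $g(t)\,E_{1-\gamma}(\Gamma(1-\gamma)\,b\,t^{1-\gamma})$.

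For the remainder, continuity of $H$ on the compact interval $I$ provides a uniform bound $\|H\|_\infty \leq M$, and the same Beta-function induction (with the constant $M$ in place of $g$) gives
$$(T^n H)(t) \leq M\,\frac{[\Gamma(1-\gamma)\,b\,t^{1-\gamma}]^{n}}{\Gamma(n(1-\gamma)+1)} \longrightarrow 0 \quad \text{as } n\to\infty,$$
since the denominator grows super-exponentially while the numerator grows only geometrically. Passing to the limit in the iterated inequality closes the argument. The main obstacle is really just the Gamma-function bookkeeping in the induction: once the role of the Beta identity is pinned down, the telescoping is automatic and the Mittag-Leffler function emerges with the correct parameters essentially for free. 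The analogous discrete proof of Lemma~2.1 proceeds along the same lines, with convolution sums replacing integrals and a discrete Beta-type estimate bounding the iterated kernel.
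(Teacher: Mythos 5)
The paper does not prove this lemma itself but defers to Brunner's monograph, and your Picard-iteration argument is exactly the classical proof given there (and in Henry's and Dixon--McKee's versions of the singular Gronwall lemma): iterate $H\le g+TH$, use the Beta identity to telescope the Gamma factors in $T^k g$, and kill the remainder $T^nH$ via the superexponential growth of $\Gamma(n(1-\gamma)+1)$. The argument is correct as written; the only cosmetic remark is that pulling $g(s)\le g(t)$ out of the convolution uses the nonnegativity of the kernel, which you implicitly have.
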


The assumptions that we are going to make about the coefficients in
our main equation \eqref{a1}  are summarized as follows.
\begin{Assu}\label{assu1}
Assume that\\
(i) there exists positive constant $\hat{L}$ such that
$$
|a(x)-a(y)|\vee |b(x)-b(y)|\leq \hat{L}|x-y|;
$$
(ii) the function $a$ and $b$ satisfy the linear growth condition
$$
|a(x)| \leq \hat{L}(1+|x|),~~|b(x)| \leq \hat{L}(1+|x|).
$$
\end{Assu}

\begin{Assu}\label{assu2}
Assume that there exists positive constant $\hat{L}$ such that the derivatives of function $a$ satisfies
$$
|\nabla a (x)-\nabla a(y)|\leq \hat{L}|x-y|;
$$
\end{Assu}

We now show that Assumption \ref{assu1} is sufficient to ensure the existence and uniqueness of the solution to equation \eqref{a1}.
\begin{Theo}\label{Theorem2.1}
Assume that the coefficients  $a$ and $b$ satisfy Assumption \ref{assu1}. Then there is a unique solution $X(t)$ to \eqref{a1}, and
the solution satisfies that for any $p\ge 1$,
\begin{equation}
\sup_{0\le t\le 1} \mathbb{E}|X(t)|^p\leq C_p, \label{e.2.1}
\end{equation}
where and throughout the remaining part of the paper we denote by $C$
(or $C_p$)  a generic  constant  (independent of $h$)  which may have different
values in different places.
\end{Theo}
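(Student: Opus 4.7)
The plan is to use a standard Picard iteration, but with two nonstandard ingredients to handle the singular kernels: a Hölder inequality with the integrable singular weight to estimate the drift integral, and the Burkholder-Davis-Gundy inequality to handle the stochastic integral, followed by the weakly singular Gronwall estimate from Lemma \ref{Lemma 3.4}. Concretely, I would define $X^{(0)}(t):=X_0$ and
\begin{equation*}
X^{(n+1)}(t):=X_0+\int_0^t(t-s)^{-\alpha}a(X^{(n)}(s))\,ds+\int_0^t(t-s)^{-\beta}b(X^{(n)}(s))\,dW_s,
\end{equation*}
prove \eqref{e.2.1} uniformly in $n$, then show $\{X^{(n)}\}$ is Cauchy in $L^p(\Omega;\mathbb{R}^d)$ pointwise in $t$, and identify the limit as a solution of \eqref{a1}. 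It suffices to treat $p\ge 2$, since the case $1\le p<2$ follows from Jensen's inequality.

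For the a priori bound, I would use Hölder with the finite weight $\int_0^t(t-s)^{-\alpha}ds\le (1-\alpha)^{-1}$ to get
\begin{equation*}
\mathbb{E}\Big|\int_0^t(t-s)^{-\alpha}a(X^{(n)}(s))\,ds\Big|^p\le C\int_0^t(t-s)^{-\alpha}\mathbb{E}|a(X^{(n)}(s))|^p\,ds,
\end{equation*}
and Burkholder-Davis-Gundy followed by another Hölder (using $2\beta<1$) to get
\begin{equation*}
\mathbb{E}\Big|\int_0^t(t-s)^{-\beta}b(X^{(n)}(s))\,dW_s\Big|^p\le C\int_0^t(t-s)^{-2\beta}\mathbb{E}|b(X^{(n)}(s))|^p\,ds.
\end{equation*}
Invoking the linear growth part of Assumption \ref{assu1} and letting $\gamma:=\max(\alpha,2\beta)<1$, one arrives at
\begin{equation*}
\mathbb{E}|X^{(n+1)}(t)|^p\le C_p+C_p\int_0^t\big[(t-s)^{-\alpha}+(t-s)^{-2\beta}\big]\mathbb{E}|X^{(n)}(s)|^p\,ds,
\end{equation*}
and Lemma \ref{Lemma 3.4} (applied after bounding the sum of singular kernels by a constant multiple of $(t-s)^{-\gamma}$) yields the desired uniform bound \eqref{e.2.1} for all iterates.

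For convergence, the Lipschitz part of Assumption \ref{assu1} combined with exactly the same two estimates gives
\begin{equation*}
\mathbb{E}|X^{(n+1)}(t)-X^{(n)}(t)|^p\le C\int_0^t\big[(t-s)^{-\alpha}+(t-s)^{-2\beta}\big]\mathbb{E}|X^{(n)}(s)-X^{(n-1)}(s)|^p\,ds,
\end{equation*}
and iteration of this singular inequality (which sharpens the kernel at each step via the Beta-function convolution identity $\int_s^t(t-r)^{-\gamma}(r-s)^{-\gamma}dr = B(1-\gamma,1-\gamma)(t-s)^{1-2\gamma}$) produces a summable majorant, so that the sequence converges in $L^p$ uniformly on $[0,1]$; continuity of $a,b$ then lets us pass to the limit in the equation. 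Uniqueness follows by the same Lipschitz estimate applied to the difference of two solutions and a direct application of Lemma \ref{Lemma 3.4}. The main obstacle is purely technical: keeping careful track of the two distinct singular exponents $\alpha$ and $2\beta$ inside BDG and Hölder, since the stochastic integral does not have the usual Itô isometry available and the weight $(t-s)^{-2\beta}$ is what forces the condition $\beta<\tfrac12$; once these singular-kernel estimates are in place, the weakly singular Gronwall lemma does the remaining work.
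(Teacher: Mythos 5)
Your proposal is correct, and its analytic core is identical to the paper's: the Jensen-type inequality with the integrable weight $(t-s)^{-\alpha}$ for the drift and BDG followed by Jensen with weight $(t-s)^{-2\beta}$ for the stochastic integral are exactly the estimates the paper uses to arrive at its key inequality \eqref{e.2.3}, with the combined kernel dominated by $(t-s)^{-\max(\alpha,2\beta)}$. Where you diverge is the fixed-point mechanism. The paper does not iterate Picard approximations; it shows the solution operator $\TT$ is a strict contraction on $\mathbb{S}^{p}(0,1)$ in one step, by equipping that space with a Bielecki-type weighted norm $\|X\|_{\kappa}=\sup_t\big(\mathbb{E}|X(t)|^{p}/E_{1-\max(\alpha,2\beta)}(\kappa t^{1-\max(\alpha,2\beta)})\big)^{1/2}$ built from the Mittag--Leffler function, and using the identity $\int_0^t(t-s)^{-\gamma}E_{1-\gamma}(\kappa s^{1-\gamma})\,ds\le \Gamma(1-\gamma)\kappa^{-1}E_{1-\gamma}(\kappa t^{1-\gamma})$ to absorb the singular convolution; choosing $\kappa$ large makes the contraction constant $\rho<1$, and the moment bound \eqref{e.2.1} then falls out of \eqref{e.2.3} with $\mu=0$ plus linear growth. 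Your route instead iterates the one-step inequality and uses the Beta-function convolution identity to show the $n$-fold iterated kernel produces factors $\Gamma(1-\gamma)^{n}/\Gamma(n(1-\gamma)+1)$, whence a summable majorant (a Weissinger-type argument). Both are standard and both work; the weighted-norm approach buys a one-line contraction at the cost of introducing the Mittag--Leffler weight, while your iteration keeps the norm elementary at the cost of tracking iterated convolutions. One small imprecision to tidy up: your a priori bound relates $\mathbb{E}|X^{(n+1)}(t)|^{p}$ to $\mathbb{E}|X^{(n)}(s)|^{p}$, which is a two-index recursion, so Lemma \ref{Lemma 3.4} does not apply verbatim; you should either run an induction comparing each iterate to the solution $v$ of the associated linear singular integral equation (i.e. $u_{0}\le v$ and $u_{n}\le v\Rightarrow u_{n+1}\le v$), or apply the Gronwall lemma only after the limit is taken. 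With that repair the argument is complete.
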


We also need the H\"older continuity in the p-th ($p\geq 1$) moment
of the exact solution.
\begin{Theo}\label{Theorem2.2}
Let the assumptions 1 and 2 are satisfied.
Denote $\gamma(\alpha, \beta)=\min\{\frac12 - \beta,~  1-\alpha\}$.
Then for the solution
$X$ to
\eqref{a1} we have   for any $p\ge 1$,
\begin{equation}
\mathbb{E}|X(t)-X(r)|^p\leq C_p |t-r|^{p\gamma(\alpha, \beta)}\,,\quad \forall \ 0\le r\le t\le 1\,.
\label{e.2.4}
\end{equation}
\end{Theo}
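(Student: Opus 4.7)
The plan is to decompose the difference $X(t)-X(r)$ into four natural pieces and bound each in $L^p$ using the moment bound \eqref{e.2.1} together with the linear growth of $a$ and $b$. Explicitly, splitting the outer integrals at $s=r$ and telescoping the kernels yields
\begin{align*}
X(t)-X(r) &= \int_r^t (t-s)^{-\alpha} a(X(s))\,ds + \int_0^r \bigl[(t-s)^{-\alpha}-(r-s)^{-\alpha}\bigr] a(X(s))\,ds \\
&\quad + \int_r^t (t-s)^{-\beta} b(X(s))\,dW_s + \int_0^r \bigl[(t-s)^{-\beta}-(r-s)^{-\beta}\bigr] b(X(s))\,dW_s.
\end{align*}

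For the two drift pieces, I would apply the power-mean inequality $\bigl|\int f g\,ds\bigr|^p \le \bigl(\int|f|\,ds\bigr)^{p-1}\int|f|\,|g|^p\,ds$ with $f$ the positive kernel or kernel-difference and $g=a(X(\cdot))$, and then use $\sup_s \EE|a(X(s))|^p\le C$, which follows from Assumption \ref{assu1}(ii) combined with \eqref{e.2.1}. The two elementary deterministic estimates $\int_r^t (t-s)^{-\alpha}\,ds\le C(t-r)^{1-\alpha}$ and $\int_0^r\bigl[(r-s)^{-\alpha}-(t-s)^{-\alpha}\bigr]ds\le C(t-r)^{1-\alpha}$ (the second being a consequence of the concavity of $x\mapsto x^{1-\alpha}$) then give a drift contribution of order $(t-r)^{p(1-\alpha)}$.

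For the two stochastic pieces, I would first apply the Burkholder-Davis-Gundy inequality and then the same power-mean trick. The term over $[r,t]$ reduces to evaluating $\bigl(\int_r^t (t-s)^{-2\beta}\,ds\bigr)^{p/2}=C(t-r)^{p(1/2-\beta)}$, which is finite since $2\beta<1$. The essential technical step is the singular kernel difference estimate
\[
\int_0^r \bigl[(r-s)^{-\beta}-(t-s)^{-\beta}\bigr]^2\,ds \le C(t-r)^{1-2\beta},
\]
which I would establish by the substitution $u=r-s$ followed by the scaling $u=(t-r)v$, reducing the problem to the $\beta$-independent bound $\int_0^\infty\bigl[v^{-\beta}-(1+v)^{-\beta}\bigr]^2 dv<\infty$; this integral converges near the origin because $2\beta<1$ and near infinity because the integrand decays like $v^{-2\beta-2}$ by the mean-value theorem. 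This yields a stochastic contribution of order $(t-r)^{p(1/2-\beta)}$.

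Combining the four bounds gives $\EE|X(t)-X(r)|^p\le C\bigl[(t-r)^{p(1-\alpha)}+(t-r)^{p(1/2-\beta)}\bigr]$, and since $|t-r|\le 1$ the dominant term corresponds to the smaller exponent $\gamma(\alpha,\beta)=\min\{1-\alpha,\,\tfrac12-\beta\}$. For $p\in[1,2)$ the estimate follows from Jensen's inequality applied to the case $p=2$, so one may assume $p\ge 2$ when carrying out the BDG step. The main obstacle is the singular kernel difference estimate for the stochastic integral, but it becomes routine once the scaling substitution above is performed; note in particular that the argument requires only Assumption \ref{assu1}, while the smoothness of $\nabla a$ from Assumption \ref{assu2} enters later in the Milstein analysis.
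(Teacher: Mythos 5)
Your proposal is correct and follows the same overall architecture as the paper's proof: the identical four-piece decomposition (splitting each integral at $s=r$ and telescoping the kernels), the same power-mean/Jensen trick with the kernel as weight for the drift terms, and Burkholder--Davis--Gundy followed by Jensen with the normalized measure $\rho_{t,r}^{-1}[(t-s)^{-\beta}-(r-s)^{-\beta}]^2\,ds$ for the stochastic terms, with the reduction to $p\ge 2$ handled by Lyapunov/Jensen exactly as in the paper. Your observation that only Assumption \ref{assu1} is actually used is also accurate.

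The one place where you genuinely diverge is the key estimate $\rho_{t,r}=\int_0^r[(t-s)^{-\beta}-(r-s)^{-\beta}]^2\,ds\le C(t-r)^{1-2\beta}$. The paper writes the kernel difference as $\beta\int_r^t(\tau-s)^{-\beta-1}\,d\tau$, expands the square into a symmetrized double integral over $r<\tau_1<\tau_2\le t$, and peels off the singularities one variable at a time. Your route --- substituting $u=r-s$, rescaling $u=(t-r)v$, and reducing everything to the single universal constant $\int_0^\infty\bigl[v^{-\beta}-(1+v)^{-\beta}\bigr]^2\,dv<\infty$ (convergent at $0$ since $2\beta<1$ and at $\infty$ by the mean-value bound $v^{-\beta}-(1+v)^{-\beta}\le\beta v^{-\beta-1}$) --- is cleaner and makes the scaling exponent $1-2\beta$ transparent, at the cost of hiding the explicit constant $\frac{2\beta}{(1-\beta)(1-2\beta)}$ that the paper's computation produces. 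The same scaling idea also handles your drift kernel difference, though your subadditivity argument for $r^{1-\alpha}+(t-r)^{1-\alpha}-t^{1-\alpha}\le(t-r)^{1-\alpha}$ is already the shortest path there. Both arguments are complete and correct.
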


 The proofs of the above two  theorems
 (Theorem \ref{Theorem2.1} and \ref{Theorem2.2})  are  given in Section \ref{section3}.

%

Let $h>0$ be the mesh size.
Throughout this paper, we consider only    uniform mesh on $[0, 1]$ by
$$
t_{i}=ih,~~i=0,1,\cdots,N,~~h=\frac{1}{N}.
$$
Denote  $\eta(s)=t_{i}$ for $t_{i}\leq s<t_{i+1}$
 and $\lfloor x\rfloor$
  is the greatest   integer less than or equal to $x$.

We first introduce the
following  $\theta$-Euler-Maruyama ($\theta$-EM )
   and Milstein-type schemes for SVIEs with weakly singular kernel respectively 
as follows
\begin{align}\label{a21}
\begin{split}
Y_{n+1}=&Y_{0}+\theta\sum_{i=0}^{n}\int_{t_{i}}^{t_{i+1}}(t_{n+1}-s)^{-\alpha}a(Y_{i+1})ds+(1-\theta)\sum_{i=0}^{n}\int_{t_{i}}^{t_{i+1}}
(t_{n+1}-s)^{-\alpha}a(Y_{i})ds\\
&+\sum_{i=0}^{n}\int_{t_{i}}^{t_{i+1}}(t_{n+1}-t_{i})^{-\beta}b(Y_{i})dW_{s},~~~n=0,1, \cdots, N-1,~~\theta\in [0,1],
\end{split}
\end{align}
and
\begin{align}\label{a211}
\begin{split}
Z_{n+1}=&Z_{0}+\sum_{i=0}^{n}\int_{t_{i}}^{t_{i+1}}(t_{n+1}-s)^{-\alpha}a(Z_{i})ds+
\sum_{i=0}^{n}\int_{t_{i}}^{t_{i+1}}(t_{n+1}-s)^{-\beta}b(Z_{i})dW_{s}\\
&+\sum_{i=0}^{n}\int_{t_{i}}^{t_{i+1}}(t_{n+1}-s)^{-\beta}b^{'}(Z_{i})\bigg{(}\sum_{l=0}^{i-1}\int_{t_{l}}^{t_{l+1}}[(s-r)^{-\beta}
-(t_{i}-r)^{-\beta}]b(Z_{l})dW_{r}\bigg{)}dW_{s}\\
&+\sum_{i=0}^{n}\int_{t_{i}}^{t_{i+1}}(t_{n+1}-s)^{-\beta}b^{'}(Z_{i})\bigg{(}\int_{t_{i}}^{s}(s-r)^{-\beta}b(Z_{i})dW_{r}\bigg{)}dW_{s},~~~n=0,1, \cdots, N-1,
\end{split}
\end{align}
 where $Y_{0}=Z_{0}=X_{0}$.

 The main results of this paper are the following strong rates  of convergence of
 the $\theta$ Euler-Maruyama scheme \eqref{a21}  and Milstein-type scheme \eqref{a211}  for
the solution of the SVIEs with weakly singular kernel \eqref{a1}.
We shall provide their  proofs   in Sections     \ref{section4} and \ref{section5}, respectively.
\begin{Theo}\label{theorem2.3}
If the functions $a$ and $b$ satisfy Assumption \ref{assu1}, then for any $p\geq 1$ 
\begin{align*}
\max_{n\in \{0,\cdots,N\}}\|X(t_{n})-Y_{n}\|_{L^{p}(\Omega;\mathbb{R}^{d})}\leq Ch^{\min\big{(}\frac{1}{2}-\beta,~ 1-\alpha\big{)}},
\end{align*}
where $X$ is the exact solution of \eqref{a1} and $Y_{n}$ is the numerical solution obtained from the $\theta$  Euler-Maruyama scheme dictated  by \eqref{a21}.
\end{Theo}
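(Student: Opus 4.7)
The strategy is to decompose the error $e_n:=X(t_n)-Y_n$ into a \emph{consistency} part (involving only the exact solution $X$) and a \emph{stability} part (involving Lipschitz differences of $a$ and $b$), bound each in the $L^p(\Omega;\mathbb{R}^d)$ norm, and finally close the estimate via the discrete Gronwall inequality of Lemma \ref{Lemma 3.2}. Concretely, I would subtract \eqref{a21} from \eqref{a1} evaluated at $t=t_{n+1}$ and, inside each drift bracket, insert $\pm a(X(t_j))$ for $j\in\{i,i+1\}$; inside each diffusion bracket I would insert $\pm(t_{n+1}-s)^{-\beta}b(X(t_i))$ and $\pm(t_{n+1}-t_i)^{-\beta}b(X(t_i))$. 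Writing $\gamma:=\min\{\tfrac12-\beta,\,1-\alpha\}$, the target is $\|e_n\|_{L^p}\le Ch^{\gamma}$ uniformly in $n$.

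For the consistency part, the drift piece follows from Lipschitz continuity of $a$, the H\"older estimate $\|X(s)-X(t_i)\|_{L^p}\le Ch^{\gamma}$ of Theorem \ref{Theorem2.2}, and $\int_0^{t_{n+1}}(t_{n+1}-s)^{-\alpha}ds\le C$. The diffusion piece I would treat via the BDG inequality together with the further splitting into (a) a Lipschitz contribution $(t_{n+1}-s)^{-\beta}[b(X(s))-b(X(\eta(s)))]$, which produces $Ch^{2\gamma}\int_0^{t_{n+1}}(t_{n+1}-s)^{-2\beta}ds\le Ch^{2\gamma}$, and (b) a kernel-discretization contribution $[(t_{n+1}-s)^{-\beta}-(t_{n+1}-\eta(s))^{-\beta}]b(X(\eta(s)))$, which via \eqref{e.2.1} reduces to estimating $\sum_{i=0}^n\int_{t_i}^{t_{i+1}}[(t_{n+1}-s)^{-\beta}-(t_{n+1}-t_i)^{-\beta}]^2\,ds$. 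Piece (b) is where I expect the main obstacle: the boundary interval $i=n$ must be separated and handled by the direct substitution $u=t_{n+1}-s$, yielding $Ch^{1-2\beta}$; for interior intervals $i<n$ the mean value theorem gives $|(t_{n+1}-s)^{-\beta}-(t_{n+1}-t_i)^{-\beta}|\le Ch(t_{n+1}-t_{i+1})^{-\beta-1}$, and squaring, integrating over $[t_i,t_{i+1}]$, and reindexing by $j=n-i$ produces $Ch^{1-2\beta}\sum_{j\ge1}j^{-2\beta-2}\le Ch^{1-2\beta}$, the series being finite because $2\beta+2>1$. Since $\gamma\le\tfrac12-\beta$, the total consistency is $O(h^{\gamma})$ in $L^p$; this is precisely the step that fixes the $\tfrac12-\beta$ half of the announced rate.

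For stability, setting $\phi_i:=\|e_i\|_{L^p}^2$ and using Lipschitz continuity, Minkowski's inequality and BDG, while absorbing the small implicit coefficient $Ch^{1-\alpha}\|e_{n+1}\|_{L^p}$ into the left-hand side for $h$ sufficiently small, I expect to obtain
\begin{equation*}
\phi_{n+1}\le Ch^{2\gamma}+Ch^{1-\alpha}\sum_{i=0}^n(n+1-i)^{-\alpha}\phi_i+Ch^{1-2\beta}\sum_{i=0}^n(n+1-i)^{-2\beta}\phi_i.
\end{equation*}
Since $n+1-i\ge 1$ and $h\le 1$, both sums are dominated by $Ch^{1-\mu}\sum_{i=0}^n(n+1-i)^{-\mu}\phi_i$ with $\mu:=\max\{\alpha,2\beta\}\in(0,1)$, so Lemma \ref{Lemma 3.2} applies and gives $\phi_{n+1}\le Ch^{2\gamma}E_{1-\mu}(\Gamma(1-\mu)\,n^{1-\mu}\cdot Ch^{1-\mu})$. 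Because $n\le N=1/h$, the Mittag-Leffler argument is bounded uniformly in $h$ and $n$, and the desired bound $\|e_n\|_{L^p}\le Ch^{\min\{1/2-\beta,\,1-\alpha\}}$ follows.
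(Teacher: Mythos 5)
Your proposal is correct and follows essentially the same route as the paper: a consistency/stability decomposition in which the local truncation error is controlled by the H\"older continuity of $X$ (Theorem \ref{Theorem2.2}) together with the kernel-discretization bound $\int_0^{t_{n+1}}[(t_{n+1}-s)^{-\beta}-(t_{n+1}-\eta(s))^{-\beta}]^2\,ds\le Ch^{1-2\beta}$, after which the error recursion is closed by the singular discrete Gronwall inequality of Lemma \ref{Lemma 3.2} with exponent $\mu=\max\{\alpha,2\beta\}$. The only differences are cosmetic --- you track $\|e_i\|_{L^p}^2$ rather than $\mathbb{E}|\varepsilon_i|^p$, you treat the boundary interval $i=n$ of the kernel sum separately (in fact slightly more carefully than \eqref{q2}, whose last summand is degenerate as written), and you make explicit the absorption of the implicit $\theta$-term for small $h$ --- none of which changes the argument.
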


\begin{Theo}\label{t.2.4}
If the functions $a$ and $b$  and  their derivatives till the third order are bounded,
 and if the Assumption \ref{assu1} and Assumption \ref{assu2} are satisfied, then  for any $p\geq 1$ 
\begin{align*}
\max_{n\in \{0,\cdots,N\}}\|X(t_{n})-Z_{n}\|_{L^{p}(\Omega;\mathbb{R}^{d})}\leq \left\{
                                                     \begin{array}{ll}
                                                     Ch^{\min\{1-\alpha, 1-2\beta\}}, & \hbox{$\alpha\neq\frac{1}{2}$;} \\
                                                     C\max\{h^{\min\{1/2, 1-2\beta\}},h^{(1-\beta)}(\ln(\frac{1}{h}))^{1/2}\}, & \hbox{$\alpha=\frac{1}{2}$.}
                                                     \end{array}
                                                          \right.
\end{align*}
\end{Theo}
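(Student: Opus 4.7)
\textbf{Proof plan for Theorem \ref{t.2.4}.}
I would mimic the structure of the proof of Theorem \ref{theorem2.3} but carry the Taylor expansion of $b$ to second order, since the Milstein correction in \eqref{a211} is designed precisely to cancel the leading stochastic error that the $\theta$-Euler-Maruyama scheme leaves uncontrolled. Set $e_n := X(t_n)-Z_n$. Subtracting \eqref{a211} from \eqref{a1} evaluated at $t_{n+1}$ and inserting $\pm a(X(t_i))$, $\pm b(X(t_i))$ in the obvious places gives a decomposition $e_{n+1}=E^a_n+E^b_n+E^{\mathrm{Mil}}_n$, where $E^a_n$ contains the drift differences, $E^b_n$ the diffusion differences coming from Taylor-expanding $b(X(s))$ around $X(t_i)$, and $E^{\mathrm{Mil}}_n$ the error in replacing the exact Milstein-like double stochastic integral (written in $X$) by the scheme's version (written in $Z$). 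At every stage I would use the substitution
\begin{align*}
X(s)-X(t_i)=\underbrace{\int_0^{t_i}\!\!\big[(s-r)^{-\alpha}-(t_i-r)^{-\alpha}\big]a(X(r))\,dr+\!\int_{t_i}^{s}\!(s-r)^{-\alpha}a(X(r))\,dr}_{=:\,A_i(s)}+\underbrace{\int_0^{t_i}\!\!\big[(s-r)^{-\beta}-(t_i-r)^{-\beta}\big]b(X(r))\,dW_r+\!\int_{t_i}^{s}\!(s-r)^{-\beta}b(X(r))\,dW_r}_{=:\,B_i(s)}
\end{align*}
to reduce everything to integrals involving only the known Hölder estimate from Theorem~\ref{Theorem2.2}, the linear-growth estimate \eqref{e.2.1}, and the singular kernels.

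The key ingredients are then as follows. (i) For the drift term I would Taylor expand $a(X(s))-a(X(t_i))=\nabla a(X(t_i))(X(s)-X(t_i))+\tfrac12 R_a$ with $|R_a|\le C|X(s)-X(t_i)|^2$; the remainder contributes $\|R_a\|_p\le C h^{2\gamma(\alpha,\beta)}\le Ch^{1-2\beta}$ after using Theorem~\ref{Theorem2.2} in $L^{2p}$, and the first-order piece is split into an $A_i$-contribution of size $h^{1-\alpha}$ (by direct kernel estimates using $\int_0^{t_i}[(t_i-r)^{-\alpha}-(s-r)^{-\alpha}]dr\le Ch^{1-\alpha}/(1-\alpha)$) and a $B_i$-contribution handled by stochastic Fubini and BDG. (ii) For the diffusion term the same Taylor expansion on $b$ gives $b(X(s))-b(X(t_i))=b'(X(t_i))[A_i(s)+B_i(s)]+\tfrac12 R_b$ with $\|R_b\|_p\le Ch^{1-2\beta}$; substituting into $\int_{t_i}^{t_{i+1}}(t_{n+1}-s)^{-\beta}\cdot dW_s$, the $A_i$-contribution yields via BDG a term of order $h^{1-\alpha}$, while the $B_i$-contribution reproduces exactly the Milstein correction terms appearing in \eqref{a211}, but with $X$ replacing $Z$. (iii) Replacing $b(X(r))$ by $b(Z_l)$ in the Milstein correction costs a Lipschitz error $|X(t_l)-Z_l|+|X(r)-X(t_l)|$ that contributes $|e_l|+h^\gamma$ inside a double stochastic integral; applying BDG twice (once in the outer $dW_s$-integral, once in the inner $dW_r$-integral) produces only kernel factors that are summable in the discrete Gronwall setup.

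Putting the three pieces together I obtain a discrete inequality of the form
\begin{align*}
\|e_n\|_p^2 \le C\,h^{2\min(1-\alpha,\,1-2\beta)}+C\sum_{i=0}^{n-1}(n-i)^{-\max(2\alpha,\,2\beta)}\,h\,\|e_i\|_p^2,
\end{align*}
and Lemma~\ref{Lemma 3.2} closes the argument in the non-critical case $\alpha\ne\tfrac12$. When $\alpha=\tfrac12$, the deterministic integrals $\int_0^{t_{n+1}}(t_{n+1}-s)^{-2\alpha}ds$ arising from BDG on the $A_i$-contribution of the diffusion become logarithmically divergent, and the regularization $(t_{n+1}-s)^{-1}\leadsto(t_{n+1}-\eta(s))^{-1}$ in the scheme produces a factor $\ln(1/h)$; after the square root from BDG this is exactly the $h^{1-\beta}(\ln(1/h))^{1/2}$ term in the statement, which dominates when it exceeds $h^{\min(1/2,\,1-2\beta)}$. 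I expect the main obstacle to be the careful combinatorial bookkeeping of the many Taylor cross-terms, and in particular the sharp estimate of the fractional-calculus integral $\int_0^{t_i}\bigl[(t_i-r)^{-\beta}-(s-r)^{-\beta}\bigr]^2 dr=O((s-t_i)^{1-2\beta})$ which underlies the $h^{1-2\beta}$ rate; once this is in place, the iterated use of BDG, stochastic Fubini, Theorem~\ref{Theorem2.2}, and the singular Gronwall lemma is routine but tedious.
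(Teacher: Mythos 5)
Your overall architecture matches the paper's: Taylor-expand $a$ and $b$ at $X(t_i)$, substitute the integral representation $X(s)-X(t_i)=A_i(s)+B_i(s)$, recognize the $B_i$-part of the diffusion as the Milstein correction, estimate the remaining truncation terms by BDG, stochastic Fubini and Jensen, and close with the singular discrete Gronwall lemma; your first-order-plus-quadratic-remainder expansion is even a little leaner than the paper's third-order expansion and still yields the needed $h^{\min(1-2\beta,\,2-2\alpha)}$ for the remainders. The genuine gap lies in the one truncation term you mention but never estimate, which is exactly the term that produces the critical behaviour: the cross term in which the stochastic increment $B_i(s)$ sits inside the singular \emph{drift} integral, $\sum_i\int_{t_i}^{t_{i+1}}(t_{n+1}-s)^{-\alpha}a'(X(t_i))B_i(s)\,ds$ (the paper's $\tilde{\gamma}$). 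You attribute the logarithm at $\alpha=\tfrac12$ to ``$\int_0^{t_{n+1}}(t_{n+1}-s)^{-2\alpha}ds$ arising from BDG on the $A_i$-contribution of the diffusion'', but that term cannot produce a $-2\alpha$ power: BDG applied to $\int(t_{n+1}-s)^{-\beta}b'(X(t_i))A_i(s)\,dW_s$ squares the \emph{diffusion} kernel and gives $\bigl(\int(t_{n+1}-s)^{-2\beta}|A_i(s)|^2ds\bigr)^{p/2}\le Ch^{p(1-\alpha)}$ with no logarithm, since $2\beta<1$ always. The exponent $-2\alpha$ appears only when the drift kernel itself is squared, which happens precisely in $\tilde{\gamma}$: after exchanging the order of integration, the $dW_r$-integrand contains $\int(t_{n+1}-s)^{-\alpha}[(s-r)^{-\beta}-(\eta(s)-r)^{-\beta}]ds$, and the It\^o isometry then produces sums of the type $\sum_l(n-l)^{-2\alpha}$, bounded for $\alpha<\tfrac12$, logarithmically divergent at $\alpha=\tfrac12$, and of size $h^{1-2\alpha}$ for $\alpha>\tfrac12$. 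This is exactly the trichotomy $h^{1-\beta}$ / $h^{1-\beta}(\ln(1/h))^{1/2}$ / $h^{3/2-\alpha-\beta}$ that generates the $\max\{\cdot,\cdot\}$ in the statement. Without this estimate your claimed truncation error $h^{\min(1-\alpha,1-2\beta)}$ is unverified (one still has to check that $3/2-\alpha-\beta\ge 1-\alpha$ and $1-\beta\ge 1-2\beta$), and the $\alpha=\tfrac12$ branch of the theorem has no proof.

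Two smaller points. First, the kernel in your final Gronwall inequality should be $(n-i)^{-\max(\alpha,2\beta)}$, not $(n-i)^{-\max(2\alpha,2\beta)}$: Jensen/H\"older on the drift difference does not double $\alpha$, and with exponent $2\alpha\ge1$ Lemma~\ref{Lemma 3.2} is not applicable as stated. Second, you implicitly need a $p$-th moment bound for the numerical solution $Z_n$ (the paper proves this separately as Lemma~\ref{lemma 4.1}) to control the terms in which $b'(X(\eta(s)))-b'(Z_{\lfloor s/h\rfloor})$ multiplies inner stochastic integrals of $b(Z_{\lfloor r/h\rfloor})$; under the boundedness hypotheses of the theorem this is routine, but it should be stated.
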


\begin{Rem}
When the singular parameter $\alpha=\beta=0$,   the above two theorems say that the $\theta$ Euler-Maruyama scheme  \eqref{a21}
 and Milstein-type scheme  \eqref{a211} recover
 the optimal convergence rate of order $0.5$ and $1$, respectively.
\end{Rem}

\section{The existence, uniqueness and H\"older continuity of the exact solution}\label{section3}
In this section we provide proofs for Theorems \ref{Theorem2.1} and
\ref{Theorem2.2}.
\begin{proof}[Proof of Theorem \ref{Theorem2.1}]    We assume $p\geq 2$. The case $1\leq p <2$ can be derived from
 Lyapunov inequality (namely $\|F\|_p\le \|F\|_q$ for
  $1\le p\le q\le \infty$).
 We borrow some ideas from \citep[Theorem 1]{Son2018Caputo}, where the authors studied the existence and uniqueness of the equations
$$
X(t)=X_{0}+\frac{1}{\Gamma(\alpha)}\bigg{(}\int_{0}^{t}(t-s)^{\alpha-1}b(X(s))ds+\int_{0}^{t}(t-s)^{\alpha-1}\sigma(X(s))dW_{s}\bigg{)},\quad
 \alpha>\frac{1}{2}
$$
in the space $L^2$.   Here, we consider SVIEs with two different singular kernel, which allow the singular parameter $\alpha$
in the drift term   vary from  $0$ to $1$ and
we consider the solution in $L^p$ for any $p\in [1, \infty)$. Moreover,  we shall prove that  $\TT: \mathbb{S}^{p}(0,1)\rightarrow \mathbb{S}^{p}(0,1)$ is a contraction mapping with respect to more general norm ($L^{p}$ norm, $p\geq 1$). Thus,    some new techniques will be needed.
Denote by $\mathbb{S}^{p}(0,1)$ the Banach
space of the stochastic process
 that are measurable, $\mathscr{F}_{t}$-adapted,  where   the norm
 of the process is defined  by
$$
\|X\|_{\mathbb{S}^{p}}:=\sup\limits_{0\leq t\leq 1}
\left(\EE  |X(t)|^p\right)^{1/p}  <\infty.
$$
Define operators $\TT: \mathbb{S}^{p}(0,1)\rightarrow \mathbb{S}^{p}(0,1)$ by
$$
\TT\lambda(t):=X_{0}+\int_{0}^{t}(t-s)^{-\alpha}a(\lambda(s))ds+\int_{0}^{t}(t-s)^{-\beta}b(\lambda(s))dW_{s}.
$$
 Obviously, the operators $\TT$ are well defined. Let $\kappa$ be a positive constant such that
\begin{align}\label{2.5}
\kappa>2^{p-1}\hat{L}^{p}\left(\frac{T^{(1-\alpha)p}}{1-\alpha}+\frac{T^{(1-2\beta)p}}{1-2\beta}\right)\Gamma(1-\max(\alpha,2\beta) ),
\end{align}
 where $\Gamma(\cdot)$ is the Gamma function.  We
 introduce  a new weighted norm $\|\cdot\|_{\kappa}$ by
$$
\|X\|_{\kappa}:=\sup_{t\in[0,1]}\sqrt{\frac{\mathbb{E}( |X(t) |^{p})}{E_{1-\max(\alpha,2\beta)}(\kappa t^{1-\max(\alpha,2\beta)})}},
$$
 where $E_{1-\max(\alpha,2\beta)}(\cdot)$ is the Mittag-Leffler function.
 It is easy to verify that $\|\cdot\|_{\mathbb{S}^{p}}$ and $\|\cdot\|_{\kappa}$ are equivalent.
 Next, we   show that $\TT$ is contractive with respect to the norm $\|\cdot\|_{\kappa}$. In fact,    for any  $ \lambda, \mu\in \mathbb{S}^{2}(0,1)$, we have by Jensen's inequality
\begin{eqnarray}
\mathbb{E} |\TT\lambda(t)-\TT\mu(t) |^{p}
&\leq& 2^{p-1}\mathbb{E}\left|\int_{0}^{t}(t-s)^{-\alpha}(a(\lambda(s))-a(\mu(s)))ds \right|^{p}\nonumber \\
&&+2^{p-1}\mathbb{E} \left|\int_{0}^{t}(t-s)^{-\beta}(b(\lambda(s))-b(\mu(s)))dW_{s} \right|^{p}\nonumber\\
&\leq& 2^{p-1}\left(\int_{0}^{t}(t-s)^{-\alpha}ds\right)^{p-1}\cdot \hat{L}^{p}
\int_{0}^{t}(t-s)^{-\alpha}\mathbb{E} |\lambda(s)-\mu(s) |^{p}ds\nonumber \\
&&+2^{p-1}\hat{L}^{p}\left(\int_{0}^{t}(t-s)^{-2\beta}ds\right)^{p/2-1}\mathbb{E}\int_{0}^{t}(t-s)^{-2\beta}
|\lambda(s)-\mu(s) |^{p}ds\nonumber\\
&\leq& 2^{p-1}\hat{L}^{p}\left(\frac{T^{(1-\alpha)p}}{1-\alpha}+\frac{T^{(1-2\beta)p}}{1-2\beta}\right)\int_{0}^{t}(t-s)^{-\max(\alpha,2\beta)}\mathbb{E} |\lambda(s)-\mu(s) |^{p}ds.\label{e.2.3}
\end{eqnarray}
Hence,  by Lemma \ref{Lemma 3.4} we have
\begin{align*}
&\frac{\mathbb{E} |\TT\lambda(t)-\TT\mu(t) |^{p}}{E_{1-\max(\alpha,2\beta)}(\kappa t^{1-\max(\alpha,2\beta)})}\\
&\leq 2^{p-1}\hat{L}^{p}\left(\frac{T^{(1-\alpha)p}}{1-\alpha}+\frac{T^{(1-2\beta)p}}{1-2\beta}\right)
\frac{\int_{0}^{t}(t-s)^{-\max(\alpha,2\beta)}E_{1-\max(\alpha,2\beta)}(\kappa s^{1-\max(\alpha,2\beta)})ds}
{E_{1-\max(\alpha,2\beta)}(\kappa t^{1-\max(\alpha,2\beta)})}\|\lambda-\mu\|_{\kappa}^{p}\\
&\leq 2^{p-1}\hat{L}^{p}\left(\frac{T^{(1-\alpha)p}}{1-\alpha}+\frac{T^{(1-2\beta)p}}{1-2\beta}\right)
\frac{\Gamma(1-\max(\alpha,2\beta))}{\kappa}\|\lambda-\mu\|_{\kappa}^{p}\\
&=  \rho\|\lambda-\mu\|_{\kappa}^{p},
\end{align*}
where
\[
\rho=2^{p-1}\hat{L}^{p}\left(\frac{T^{(1-\alpha)p}}{1-\alpha}+\frac{T^{(1-2\beta)p}}{1-2\beta}\right)
\frac{\Gamma(1-\max(\alpha,2\beta))}{\kappa}
\]
and where we used $$
\int_{0}^{t}(t-s)^{-\max(\alpha,2\beta)}E_{1-\max(\alpha,2\beta)}(\kappa s^{1-\max(\alpha,2\beta)})ds\leq \frac{\Gamma(1-\max(\alpha,2\beta))}{\kappa}
E_{1-\max(\alpha,2\beta)}(\kappa t^{1-\max(\alpha,2\beta)})\,.
$$
By our choice of $\kappa$ (namely \eqref{2.5})
we see that  that $\rho <1$.  Thus we   conclude that
$\TT: \mathbb{S}^{p}(0,1)\rightarrow \mathbb{S}^{p}(0,1)$ is a contraction mapping. By Banach contractive mapping theorem,
 we   see  that there exists a unique solution in $\mathbb{S}^{p}(0,1)$.

The bound \eqref{e.2.1} follows easily from the above argument
(e.g. \eqref{e.2.3} with $\mu=0$)
and the linear growth condition of the functions
  $a$ and $b$.
\end{proof}

\begin{proof}[Proof of Theorem \ref{Theorem2.2}]
 We continue to  assume $p\ge 2$.
The case $1\leq p<2$ can be proved by using a  Lyapunov inequality. \
 Let $X(t)$ satisfy \eqref{a1}.  We can write
\begin{align*}
X(t)-X(r)=&\bigg{(}\int_{0}^{t}(t-s)^{-\alpha}a(X(s))ds-\int_{0}^{r}(r-s)^{-\alpha}a(X(s))ds\bigg{)}\\
&+\bigg{(}\int_{0}^{t}(t-s)^{-\beta}b(X(s))dW_{s}-\int_{0}^{r }(r -s)^{-\beta}b(X(s))dW_{s}\bigg{)}\\
&=:I_{41}+I_{42}.
\end{align*}
Obviously, $I_{42}$ can be written as
$$
I_{42}=\int_{0}^{r }[(t-s)^{-\beta}-(r -s)^{-\beta}]b(X(s))dW_{s}+\int_{r }^{t}(t-s)^{-\beta}b(X(s))dW_{s}=:I_{421}+I_{422}\,.
$$
Then by Burkholder-Davis-Gundy inequality, we have
\begin{align*}
\mathbb{E}|I_{421}|^p =&   \EE\left|
\int_{0}^{r }[(t-s)^{-\beta}-(r -s)^{-\beta}]b(X(s))dW_{s}
\right|^p  \\
\leq& C_p   \mathbb{E}
\left( \int_{0}^{r }\left| [(t-s)^{-\beta}-(r -s)^{-\beta}]b(X(s))\right|^2 ds  \right)^{p/2}  \,.
\end{align*}
Denote
\[
\rho_{t,r}:=\int_{0}^{r }[(t-s)^{-\beta}-(r -s)^{-\beta}]^{2}ds\,.
\]
Since $\phi(x)=x^{p/2}$, $x>0$ is convex,  applying Jensen's
 inequality  we have
\begin{eqnarray}
&& \left( \frac{1}{\rho_{t,r}}\int_{0}^{r }\left| [(t-s)^{-\beta}-(r -s)^{-\beta}]b(X(s))\right|^2 ds  \right)^{p/2}\nonumber\\
&&\qquad  \le   \frac{1}{\rho_{t,r}}\int_{0}^{r }\left| (t-s)^{-\beta}-(r -s)^{-\beta}\right|^2\left|b(X(s))\right|^p ds\,.
 \end{eqnarray}
Thus we have
\begin{align}
\mathbb{E}|I_{421}|^p
\leq&   C_p \rho_{t,r}^{\frac p2 -1}  \int_{0}^{r }\left| (t-s)^{-\beta}-(r -s)^{-\beta}\right|^2\EE \left|b(X(s))\right|^p ds\nonumber\\
\leq&  C_p  \rho_{t,r}^{\frac p2 -1}  \int_{0}^{r }\left| (t-s)^{-\beta}-(r -s)^{-\beta}\right|^2\EE (1+\left| X(s) \right|)^p ds\nonumber\\
\leq&  C_p  \rho_{t,r}^{\frac p2 -1}  \int_{0}^{r }\left| (t-s)^{-\beta}-(r -s)^{-\beta}\right|^2  ds = \rho_{t,r}^{\frac p2  } \,.
\label{e.2.6}
\end{align}
Now we need to obtain a sharp bound on $\rho_{t,r}$
\begin{align*}
\rho_{t,r} =&  \int_{0}^{r }[(t-s)^{-\beta}-(r -s)^{-\beta}]^{2}ds \\
=& \beta^2 \int_{0}^{r }\left(\int_r^t (\tau-s)^{-\beta-1} d\tau \right)^2ds\\
=& \beta^2 \int_{0}^{r } \int_r^t\int_r^t
 (\tau_1-s)^{-\beta-1} (\tau_2-s)^{-\beta-1} d\tau_1 d\tau_2  ds\\
 =& 2\beta^2  \int_{r<\tau_1<\tau_2\le t}  \int_0^r
 (\tau_1-s)^{-\beta-1} (\tau_2-s)^{-\beta-1} ds d\tau_1 d\tau_2  \\
 \leq& 2\beta^2  \int_{r<\tau_1<\tau_2\le t}  \int_0^r
 (\tau_1-s)^{-\beta-1} (\tau_2-r)^{-\beta-1} ds d\tau_1 d\tau_2  \\
 =& 2\beta   \int_{r<\tau_1<\tau_2\le t}
 \left[ (\tau_1-r)^{-\beta }-\tau_1^{-\beta}\right]
  (\tau_2-r)^{-\beta-1}   d\tau_1 d\tau_2  \\
\leq & 2\beta   \int_{r<\tau_1<\tau_2\le t}
   (\tau_1-r)^{-\beta }
  (\tau_2-r)^{-\beta-1}   d\tau_1 d\tau_2  \\
= & \frac{2\beta }{ 1-\beta }  \int_{r}^{t}
  (\tau_2-r)^{-2\beta }     d\tau_2  \\
  = & \frac{2\beta }{ (1-\beta)(1-2\beta) }
  (t-r)^{1-2\beta }      \,.
\end{align*}
This together with \eqref{e.2.6} implies
\begin{equation}
\EE |I_{421}|^p\le C_p (t-r)^{p(\frac12-\beta)}\,. \label{e.2.7}
\end{equation}
Analogously  to \eqref{e.2.6}, if we denote
\[
\tilde \rho_{t,r}=\int_r^t(t-s)^{-2\beta} ds=\frac{(t-r)^{1-2\beta}}{
1-2\beta}\,.
\]
Then we have
\begin{equation}
\EE |I_{422}|^p\le C_p \tilde \rho_{t,r}^{p/2}\le
C_p (t-r)^{p(\frac12-\beta)}\,. \label{e.2.8}
\end{equation}
Combining this with \eqref{e.2.7} we have
\begin{equation}
\EE |I_{42}|^p\le C_p (t-r)^{p(\frac12-\beta)}\,.
\label{e.2.9}
\end{equation}
For the term $I_{41}$
 \begin{align*}
 \mathbb{E}|I_{41}|^p\leq& 2^p \mathbb{E}\left|\int_{0}^{r }[(t-s)^{-\alpha}-(r -s)^{-\alpha}]a(X(s))ds\right|^{p}\\
 &\qquad
 +2^p\mathbb{E}\left|\int_{r }^{t}(t-s)^{-\alpha}a(X(s))ds\right|^{p}
 =:I_{411}+I_{412}\,.
 \end{align*}
In the same way as for \eqref{e.2.7} we have
  \begin{align*}
 I_{411} \leq&  \left(\int_0^r \left|(t-s)^{-\alpha}-(r -s)^{-\alpha}
 \right| ds\right)^{p}\\
 \le&  C_p \left(\int_0^r  \int_r^t (\tau -s)^{-\al-1} d\tau ds\right)^{p}\\
=& C_p \left(   \int_r^t \int_0^r (\tau -s)^{-\al-1} ds d\tau  \right)^{p}
 \\
\le & C_p \left(   \int_r^t  (\tau -r)^{-\al }   d\tau  \right)^{p}
=C_p(t-r)^{(1-\alpha)p}\,.  \\
 \end{align*}
In the similar way, we can prove that
$$
\mathbb{E}|I_{412}|^{2}\leq C_p (t-r)^{p(1-\alpha)}.
$$
This completes the proof of the theorem.
\end{proof}

%

\section{Convergence rate of $\theta$-Euler-Maruyama
scheme }\label{section4}
In this section we provide proof of the $\theta$ Euler-Maruyama
scheme, namely, Theorem \ref{theorem2.3}.
We denote  the   local truncation errors  of the $\theta$ Euler-Maruyama
scheme by
\begin{align}\label{a22}
\begin{split}
R_{h}^{E}(t_{n+1})=&\theta\sum_{i=0}^{n}\int_{t_{i}}^{t_{i+1}}(t_{n+1}-s)^{-\alpha}(a(X(s))-a(X(t_{i+1})))ds\\
&+(1-\theta)\sum_{i=0}^{n}
\int_{t_{i}}^{t_{i+1}}(t_{n+1}-s)^{-\alpha}(a(X(s))-a(X(t_{i})))ds\\
&+\sum_{i=0}^{n}\int_{t_{i}}^{t_{i+1}}(t_{n+1}-s)^{-\beta}b(X(s))-(t_{n+1}-t_{i})^{-\beta}b(X(t_{i}))dW_{s}\\
=&:I_{1}+I_{2}+I_{3}\,.
\end{split}
\end{align}

\begin{Lem}\label{Lemma 2.1}
If Assumption \ref{assu1} holds, then for the local truncation error $R_{h}^{E}(t_{n+1})$, there is a constant $C$ such that
 for any $p\geq 1$
$$
\mathbb{E}|R_{h}^{E}(t_{n+1})|^{p}\leq Ch^{\gamma(\alpha, \beta)p},
$$
 where $\gamma(\alpha, \beta)=\min\{\frac12 - \beta,~  1-\alpha\}$.
\end{Lem}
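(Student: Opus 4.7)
The plan is to estimate the three terms $I_1$, $I_2$, $I_3$ of \eqref{a22} separately and show that each contributes at most $Ch^{p\gamma(\alpha,\beta)}$; without loss of generality I take $p\ge 2$, the case $1\le p<2$ following from Lyapunov's inequality. The two main ingredients are the Lipschitz property of $a$ and $b$ from Assumption \ref{assu1}, and the H\"older continuity $\|X(s)-X(r)\|_{L^p(\Omega)}\le C|s-r|^{\gamma(\alpha,\beta)}$ from Theorem \ref{Theorem2.2}.

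For the two drift pieces $I_1$ and $I_2$ I would apply Minkowski's inequality against the finite measure $(t_{n+1}-s)^{-\alpha}ds$ (integrable since $\alpha<1$), then the Lipschitz bound on $a$ combined with Theorem \ref{Theorem2.2}, giving
\begin{align*}
\|I_1\|_{L^p(\Omega)}+\|I_2\|_{L^p(\Omega)}\le C\int_0^{t_{n+1}}(t_{n+1}-s)^{-\alpha}h^{\gamma(\alpha,\beta)}ds\le Ch^{\gamma(\alpha,\beta)}.
\end{align*}

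For the stochastic piece $I_3$ I would add and subtract $(t_{n+1}-t_i)^{-\beta}b(X(t_i))$ on each block to obtain $I_3=I_{31}+I_{32}$, where
\begin{align*}
I_{31}&:=\sum_{i=0}^{n}\int_{t_i}^{t_{i+1}}(t_{n+1}-s)^{-\beta}\bigl(b(X(s))-b(X(t_i))\bigr)dW_s,\\
I_{32}&:=\sum_{i=0}^{n}\int_{t_i}^{t_{i+1}}\bigl[(t_{n+1}-s)^{-\beta}-(t_{n+1}-t_i)^{-\beta}\bigr]b(X(t_i))dW_s.
\end{align*}
The term $I_{31}$ I would treat by Burkholder--Davis--Gundy followed by Minkowski in the $L^{p/2}$ norm (valid since $p\ge 2$ and $(t_{n+1}-s)^{-2\beta}$ is integrable because $2\beta<1$), then Lipschitz of $b$ and Theorem \ref{Theorem2.2} exactly as for $I_1$, yielding $\mathbb{E}|I_{31}|^p\le Ch^{p\gamma(\alpha,\beta)}$.

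The genuine obstacle is the kernel-difference term $I_{32}$. After BDG and the uniform bound $\|b(X(t_i))\|_{L^p(\Omega)}\le C$ (from linear growth and \eqref{e.2.1}), the problem reduces to proving the deterministic estimate
\begin{align*}
A_n:=\sum_{i=0}^{n}\int_{t_i}^{t_{i+1}}\bigl[(t_{n+1}-s)^{-\beta}-(t_{n+1}-t_i)^{-\beta}\bigr]^2 ds \le Ch^{1-2\beta}
\end{align*}
uniformly in $n$. I would treat the terminal block $i=n$ by the rescaling $s=t_n+h\tau$, which turns its contribution into $h^{1-2\beta}\int_0^1[(1-\tau)^{-\beta}-1]^2 d\tau=O(h^{1-2\beta})$. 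For blocks $i<n$ I would use the mean-value identity $(t_{n+1}-s)^{-\beta}-(t_{n+1}-t_i)^{-\beta}=\beta\int_{t_i}^{s}(t_{n+1}-u)^{-\beta-1}du$ and bound the integrand by $(t_{n+1}-t_{i+1})^{-\beta-1}$, producing a block contribution of order $h^3(t_{n+1}-t_{i+1})^{-2\beta-2}$; summing in $i$ with the convergence of $\sum_{k\ge 1}k^{-2\beta-2}$ (since $2\beta+2>1$) again gives $O(h^{1-2\beta})$. Hence $\mathbb{E}|I_{32}|^p\le Ch^{p(1/2-\beta)}$, and since $1/2-\beta\ge\gamma(\alpha,\beta)$, this is absorbed into $Ch^{p\gamma(\alpha,\beta)}$, completing the proof.
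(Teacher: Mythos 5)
Your proof is correct and follows essentially the same route as the paper: split each term into a coefficient-difference part and a kernel-difference part, apply Burkholder--Davis--Gundy together with an $L^{p/2}$ integral inequality (the paper uses Jensen's inequality against a normalized measure where you use Minkowski, to the same effect), invoke the H\"older continuity of $X$ from Theorem \ref{Theorem2.2}, and reduce the remaining piece to the deterministic bound $\sum_{i}\int_{t_i}^{t_{i+1}}[(t_{n+1}-s)^{-\beta}-(t_{n+1}-t_i)^{-\beta}]^2\,ds\le Ch^{1-2\beta}$ via the mean-value identity $\beta\int_{t_i}^{s}(t_{n+1}-u)^{-\beta-1}du$. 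Your separate treatment of the terminal block $i=n$ by the rescaling $s=t_n+h\tau$ is in fact a welcome refinement over the paper's argument, whose uniform bound $(t_{n+1}-t_{i+1})^{-\beta-1}$ degenerates at $i=n$ and therefore implicitly requires exactly the direct computation you supply.
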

\begin{proof}
 By \eqref{a22}
\begin{align}
\begin{split}
\mathbb{E}|I_{3}|^{p}\leq& \mathbb{E}\left|\int_{0}^{t_{n+1}}[(t_{n+1}-s)^{-\beta}-(t_{n+1}-\eta(s))^{-\beta}]b(X(s))dW_{s}\right|^{p}\\
&+\mathbb{E}\left|\int_{0}^{t_{n+1}}(t_{n+1}-\eta(s))^{-\beta}[b(X(s))-b(X({\eta(s)}))]dW_{s}\right|^{p}=:I_{31}+I_{32}.
\end{split}
\end{align}
Furthermore, using Assumption \ref{assu1} and Theorem \ref{Theorem2.1}, we have
\begin{align*}
I_{31}\leq& \mathbb{E}\bigg{(}\int_{0}^{t_{n+1}}[(t_{n+1}-s)^{-\beta}-(t_{n+1}-\eta(s))^{-\beta}]^{2}|b(X(s))|^{2}ds\bigg{)}^{\frac{p}{2}}.
\end{align*}
Let
$$
\rho_{n+1}=\int_{0}^{t_{n+1}}[(t_{n+1}-s)^{-\beta}-(t_{n+1}-\eta(s))^{-\beta}]^{2}ds\,.
$$
Since $\phi(x)=x^{p/2}\,, x>0$   is convex by Jensen's inequality,  we have
\begin{align*}
&\left(\frac{1}{\rho_{n+1}}\int_{0}^{t_{n+1}}[(t_{n+1}-s)^{-\beta}-(t_{n+1}-\eta(s))^{-\beta}]^{2}|b(X(s))|^{2}ds\right)^{\frac{p}{2}}\\
&\leq \frac{1}{\rho_{n+1}}\int_{0}^{t_{n+1}}[(t_{n+1}-s)^{-\beta}-(t_{n+1}-\eta(s))^{-\beta}]^{2}|b(X(s))|^{p}ds.
\end{align*}
Hence,
\begin{align}\label{q1}
I_{31}\leq C\rho_{n+1}^{\frac{p}{2}-1}\int_{0}^{t_{n+1}}[(t_{n+1}-s)^{-\beta}-(t_{n+1}-\eta(s))^{-\beta}]^{2}\mathbb{E}(1+|X(s)|)^{p}ds
\leq C\rho_{n+1}^{\frac{p}{2}}.
\end{align}
Now, we give a sharp estimate for $\rho_{n+1}$
\begin{align}\label{q2}
\nonumber\rho_{n+1}=&C \int_{0}^{t_{n+1}}[(t_{n+1}-s)^{-\beta}-(t_{n+1}-\eta(s))^{-\beta}]^{2}ds\\
\nonumber=&C\sum_{i=0}^{n}\int_{t_{i}}^{t_{i+1}}\left|\beta\int_{t_{i}}^{s}(t_{n+1}-\tau)^{-\beta-1}d\tau\right|^{2}ds\\
\leq& C\beta^{2}\sum_{i=0}^{n}\int_{t_{i}}^{t_{i+1}}\left|\int_{t_{i}}^{t_{i+1}}(t_{n+1}-t_{i+1})^{-\beta-1}d\tau\right|^{2}ds\\
\nonumber\leq& C\beta^{2}h^{1-2\beta}\sum_{i=0}^{n}(n-i)^{-2(\beta+1)}\leq Ch^{1-2\beta}.
\end{align}
Combining this  with \eqref{q1}, we have
$$
I_{31}\leq Ch^{(1-2\beta)\frac{p}{2}}.
$$
Let $\widetilde{\rho}_{n+1}=\int_{0}^{t_{n+1}}(t_{n+1}-\eta(s))^{-2\beta}ds$. By  an  analysis similar to the
 above, we have
$$
I_{32}\leq C_{p}\widetilde{\rho}_{n+1}^{\frac{p}{2}-1}\int_{0}^{t_{n+1}}(t_{n+1}-\eta(s))^{2}\mathbb{E}|X(s)-X(\eta(s))|^{p}ds,
$$
 where Assumption \ref{assu1} was  used. Using Theorem \ref{Theorem2.2}, one sees that
$$
I_{32}\leq Ch^{\gamma(\alpha, \beta)p}.
$$
Thus,
$$
\mathbb{E}|I_{3}|^{2}\leq Ch^{\gamma(\alpha, \beta)p}.
$$
 In a similar manner, by H\"{o}lder inequality and Jensen's inequality, one also has
$$
\mathbb{E}|I_{1}|^{2}\leq Ch^{p(1-\alpha)},~~~\mathbb{E}|I_{2}|^{2}\leq Ch^{p(1-\alpha)}.
$$
Summarizing the above arguments the desired assertion follows.
\end{proof}


\begin{proof}[Proof of Theorem \ref{theorem2.3}]
Denote  $\varepsilon_{n+1}:=X(t_{n+1})-Y_{n+1}$. It follows from \eqref{a1}, \eqref{a21} and \eqref{a22}
\begin{align}\label{aa1}
\begin{split}
\varepsilon_{n+1}=&R_{h}^{E}(t_{n+1})+\theta\sum_{i=0}^{n}\int_{t_{i}}^{t_{i+1}}(t_{n+1}-s)^{-\alpha}
(a(X_{t_{i+1}})-a(Y_{i+1}))ds\\
&+(1-\theta)\sum_{i=0}^{n}\int_{t_{i}}^{t_{i+1}}(t_{n+1}-s)^{-\alpha}(a(X_{t_{i}})-a(Y_{i}))ds\\
&+\sum_{i=0}^{n}\int_{t_{i}}^{t_{i+1}}(t_{n+1}-s)^{-\beta}(b(X_{t_{i}})-b(Y_{i}))dW_{s}.
\end{split}
\end{align}
 Let
 $$
 \varepsilon_{\hat{s}}=\sum_{i=0}^{N-1}\varepsilon_{i+1}~\chi_{s\in [t_{i}, t_{i+1}]},~~~
 \varepsilon_{\check{s}}=\sum_{i=0}^{N-1}\varepsilon_{i}~\chi_{s\in [t_{i}, t_{i+1}]},
 $$
 where
$$
\chi_{s\in [t_{i}, t_{i+1}]}=\left\{
                               \begin{array}{ll}
                                 1, & \hbox{$s\in [t_{i}, t_{i+1}]$;} \\
                                 0, & \hbox{\text{otherwise}.}
                               \end{array}
                             \right.
$$
  Then
\begin{eqnarray}\label{aa2}
\mathbb{E}|\varepsilon_{n+1}|^{p}&\leq& 4^{p-1}\mathbb{E}|R_{h}^{E}(t_{n+1})|^{p}+4^{p-1}\theta^{p}\mathbb{E}
\big{|}\int_{0}^{t_{n+1}}(t_{n+1}-s)^{-\alpha}(a(X_{t_{i+1}})-a(Y_{i+1}))ds\big{|}^{p}\nonumber\\
&&+4^{p-1}(1-\theta)^{p}\mathbb{E}\big{|}\int_{0}^{t_{n+1}}(t_{n+1}-s)^{-\alpha}(a(X_{t_{i}})-a(Y_{i}))ds\big{|}^{p}\nonumber\\
&&+4^{p-1}\mathbb{E}\big{|}\int_{0}^{t_{n+1}}
(t_{n+1}-\eta(s))^{-\beta}(b(X_{t_{i}})-b(Y_{i}))dW_{s}\big{|}^{p}\nonumber\\
&\leq& C\bigg{\{}\mathbb{E}|R_{h}^{E}(t_{n+1})|^{p}
+\bigg{(}\int_{0}^{t_{n+1}}(t_{n+1}-s)^{-\alpha}ds\bigg{)}^{p-1}
\int_{0}^{t_{n+1}}(t_{n+1}-s)^{-\alpha}\mathbb{E}|\varepsilon_{\hat{s}}|^{p}ds\nonumber\\
&&+\bigg{(}\int_{0}^{t_{n+1}}(t_{n+1}-s)^{-\alpha}ds\bigg{)}^{p-1}
\int_{0}^{t_{n+1}}(t_{n+1}-s)^{-\alpha}\mathbb{E}|\varepsilon_{\check{s}}|^{p}ds\nonumber\\
&&+
\left(\int_{0}^{t_{n+1}}(t_{n+1}-\eta(s))^{-2\beta}ds\right)^{\frac{p}{2}-1}\int_{0}^{t_{n+1}}
(t_{n+1}-s)^{-2\beta}\mathbb{E}|\varepsilon_{\check{s}}|^{p}ds\bigg{\}},
\end{eqnarray}
 where Assumption \ref{assu1} and Jensen's inequality were  used.
Then,
\begin{align}\label{aa3}
\int_{t_{i}}^{t_{i+1}}(t_{n+1}-s)^{-\alpha}ds=\frac{(t_{n+1}-t_{i})^{1-\alpha}-(t_{n+1}-t_{i+1})^{1-\alpha}}{1-\alpha}
=\frac{h^{1-\alpha}[(n+1-i)^{1-\alpha}-(n-i)^{1-\alpha}]}{1-\alpha}.
\end{align}
Note that
\begin{align*}
&(n+1-i)^{1-\alpha}-(n-i)^{1-\alpha}\\
&=(n+1-i)^{1-\alpha}[1-(1-\frac{1}{n+1-i})^{1-\alpha}]\\
&=(n+1-i)^{-\alpha}\big{[}1-\theta_{n,i}\frac{1}{n+1-i}\big{]}^{-\alpha}.
\end{align*}
Since $1-\theta_{n,i}\frac{1}{n+1-i}\geq \frac{1}{2}$, we have
\begin{align}\label{aa4}
(n+1-i)^{1-\alpha}-(n-i)^{1-\alpha}\leq 2^{\alpha}(n+1-i)^{-\alpha}.
\end{align}
 Combining the above results with  \eqref{aa2},
 we obtain
\begin{align}\label{aa5}
\begin{split}
\mathbb{E}|\varepsilon_{n+1}|^{p}\leq& C\bigg{[}\mathbb{E}|R_{h}^{E}(t_{n+1})|^{p}+h^{1-\alpha}\sum_{i=0}^{n}\mathbb{E}|\varepsilon_{i+1}|^{p}\\
&+
h^{1-\max(\alpha,2\beta)}\sum_{i=0}^{n}\mathbb{E}|\varepsilon_{i}|^{p}+h^{1-\max(\alpha,2\beta)}\sum_{i=0}^{n}
(n+1-i)^{-\max(\alpha,2\beta)}\mathbb{E}|\varepsilon_{i}|^{p}\bigg{]}.
\end{split}
\end{align}
 The final result follows from Lemma \ref{Lemma 2.1} and Lemma \ref{Lemma 3.2}.
\end{proof}

\section{Convergence rate of Milstein-type scheme}\label{section5}
In this section we provide a proof for the strong convergence rate of Milstein scheme, i.e.  Theorem \ref{t.2.4}. We denote
\begin{align}\label{a222}
\begin{split}
R_{h}^{M}(t_{n+1})=&X(t_{n+1})-X_{0}-\sum_{i=0}^{n}\int_{t_{i}}^{t_{i+1}}(t_{n+1}-s)^{-\alpha}a(X(t_{i}))ds-
\sum_{i=0}^{n}\int_{t_{i}}^{t_{i+1}}(t_{n+1}-s)^{-\beta}b(X(t_{i}))dW_{s}\\
&-\sum_{i=0}^{n}\int_{t_{i}}^{t_{i+1}}(t_{n+1}-s)^{-\beta}b^{'}(X(t_{i}))\bigg{(}\int_{0}^{t_{i}}[(s-r)^{-\beta}
-(t_{i}-r)^{-\beta}]b(X(r))dW_{r}\bigg{)}dW_{s}\\
&-\sum_{i=0}^{n}\int_{t_{i}}^{t_{i+1}}(t_{n+1}-s)^{-\beta}b^{'}(X(t_{i}))\bigg{(}\int_{t_{i}}^{s}(s-r)^{-\beta}b(X(r))dW_{r}\bigg{)}dW_{s}\,.
\end{split}
\end{align}


From \eqref{a1}, we have
\begin{align}\label{aa6}
X({t_{n+1}})=X_{0}+\sum_{i=0}^{n}\int_{t_{i}}^{t_{i+1}}(t_{n+1}-s)^{-\alpha}a(X(s))ds
+\sum_{i=0}^{n}\int_{t_{i}}^{t_{i+1}}(t_{n+1}-s)^{-\beta}b(X(s))dW_{s}.
\end{align}
Using the Taylor expansions of function $a(X(s))$ and $b(X(s))$ at $X(t_{i})$, we get
\begin{align}\label{aa7}
a(X(s))=&a(X(t_{i}))+a^{'}(X(t_{i}))(X(s)-X(t_{i}))\nonumber\\
&+\frac{a^{''}(X(t_{i}))}{2!}(X(s)-X(t_{i}))^{2}
+\frac{a^{'''}(X_{t_{i,\theta_{1}}})}{3!}(X(s)-X(t_{i}))^{3},
\end{align}
and
\begin{align}\label{aa8}
b(X(s))=&b(X(t_{i}))+b^{'}(X(t_{i}))(X(s)-X(t_{i}))\nonumber\\
&+\frac{b^{''}(X(t_{i}))}{2!}(X(s)-X(t_{i}))^{2}
+\frac{b^{'''}(X_{t_{i,\theta_{2}}})}{3!}(X(s)-X(t_{i}))^{3},
\end{align}
 where $X_{t_{i,\theta_{1}}}$, $X_{t_{i,\theta_{2}}}$ are between  $X({t_{i}})$ and $X(s)$.
Substituting \eqref{aa7} and \eqref{aa8} to \eqref{aa6}, one finds
\begin{align}\label{aa9}
\nonumber X({t_{n+1}})=&X_{0}+\sum_{i=0}^{n}\int_{t_{i}}^{t_{i+1}}(t_{n+1}-s)^{-\alpha}a(X(t_{i}))ds
+\sum_{i=0}^{n}\int_{t_{i}}^{t_{i+1}}(t_{n+1}-s)^{-\beta}b(X(t_{i}))dW_{s}\\
\nonumber&+\sum_{i=0}^{n}\int_{t_{i}}^{t_{i+1}}(t_{n+1}-s)^{-\alpha}a^{'}(X(t_{i}))(X(s)-X(t_{i}))ds\\
\nonumber&+\sum_{i=0}^{n}\int_{t_{i}}^{t_{i+1}}(t_{n+1}-s)^{-\beta}b^{'}(X(t_{i}))(X(s)-X(t_{i}))dW_{s}\\
&+\frac{1}{2!}\sum_{i=0}^{n}\int_{t_{i}}^{t_{i+1}}(t_{n+1}-s)^{-\alpha}a^{''}(X(t_{i}))(X(s)-X(t_{i}))^{2}ds\\
\nonumber&+\frac{1}{2!}\sum_{i=0}^{n}\int_{t_{i}}^{t_{i+1}}(t_{n+1}-s)^{-\beta}b^{''}(X(t_{i}))(X(s)-X(t_{i}))^{2}dW_{s}\\
\nonumber&+\frac{1}{3!}\sum_{i=0}^{n}\int_{t_{i}}^{t_{i+1}}(t_{n+1}-s)^{-\alpha}a^{'''}(X_{t_{i,\theta_{1}}})(X(s)-X(t_{i}))^{3}ds\\
\nonumber&+\frac{1}{3!}\sum_{i=0}^{n}\int_{t_{i}}^{t_{i+1}}(t_{n+1}-s)^{-\beta}b^{'''}(X_{t_{i,\theta_{2}}})(X(s)-X(t_{i}))^{3}dW_{s}.
\end{align}
It follows from \eqref{a1} that,
\begin{align}\label{aa10}
\begin{split}
X(s)-X(t_{i})=&\int_{0}^{t_{i}}[(s-r)^{-\alpha}-(t_{i}-r)^{-\alpha}]a(X(r))dr+\int_{t_{i}}^{s}(s-r)^{-\alpha}a(X(r))dr\\
&+\int_{0}^{t_{i}}[(s-r)^{-\beta}-(t_{i}-r)^{-\beta}]b(X(r))dW_{r}+\int_{t_{i}}^{s}(s-r)^{-\beta}b(X(r))dW_{r}.
\end{split}
\end{align}
Substituting \eqref{aa10} into \eqref{aa9}, one arrives at
\begin{align}\label{aa11}
\begin{split}
X({t_{n+1}})=&X_{0}+\sum_{i=0}^{n}\int_{t_{i}}^{t_{i+1}}(t_{n+1}-s)^{-\alpha}a(X(t_{i}))ds
+\sum_{i=0}^{n}\int_{t_{i}}^{t_{i+1}}(t_{n+1}-s)^{-\beta}b(X(t_{i}))dW_{s}\\
&+\sum_{i=0}^{n}\int_{t_{i}}^{t_{i+1}}(t_{n+1}-s)^{-\beta}b^{'}(X(t_{i}))
\int_{0}^{t_{i}}[(s-r)^{-\beta}-(t_{i}-r)^{-\beta}]b(X(r))dW_{r}dW_{s}\\
&+\sum_{i=0}^{n}\int_{t_{i}}^{t_{i+1}}(t_{n+1}-s)^{-\beta}b^{'}(X(t_{i}))
\int_{t_{i}}^{s}(s-r)^{-\beta}b(X(r))dW_{r}dW_{s}\\
&+\tilde{\beta}+\tilde{\gamma}+\tilde{\delta}+R_{n},
\end{split}
\end{align}
where

\begin{align*}
\tilde{\beta}=&\sum_{i=0}^{n}\int_{t_{i}}^{t_{i+1}}(t_{n+1}-s)^{-\beta}b^{'}(X(t_{i}))\int_{0}^{t_{i}}
[(s-r)^{-\alpha}-(t_{i}-r)^{-\alpha}]a(X(r))drdW_{s}\\
&+\sum_{i=0}^{n}\int_{t_{i}}^{t_{i+1}}(t_{n+1}-s)^{-\beta}b^{'}(X(t_{i}))
\int_{t_{i}}^{s}(s-r)^{-\alpha}a(X(r))drdW_{s},
\end{align*}

\begin{align*}
\tilde{\gamma}=&\sum_{i=0}^{n}\int_{t_{i}}^{t_{i+1}}(t_{n+1}-s)^{-\alpha}a^{'}(X(t_{i}))\int_{0}^{t_{i}}[(s-r)^{-\beta}-(t_{i}-r)^{-\beta}]
b(X(r))dW_{r}ds\\
&+\sum_{i=0}^{n}\int_{t_{i}}^{t_{i+1}}(t_{n+1}-s)^{-\alpha}a^{'}(X(t_{i}))\int_{t_{i}}^{s}(s-r)^{-\beta}b(X(r))dW_{r}ds,
\end{align*}
and
\begin{align*}
\tilde{\delta}=&\frac{1}{2}\sum_{i=0}^{n}\int_{t_{i}}^{t_{i+1}}(t_{n+1}-s)^{-\beta}b^{''}(X(t_{i}))
\bigg{[}\int_{0}^{t_{i}}[(s-r)^{-\beta}-(t_{i}-r)^{-\beta}]b(X(r))dW_{r}\\
&+\int_{t_{i}}^{s}(s-r)^{-\beta}b(X(r))dW_{r}\bigg{]}^{2}dW_{s}.
\end{align*}

Under Assumption \ref{assu1}, we now prove   that the numerical solution has a bounded moment of order $p ~(p\geq 1)$.
\begin{Lem}\label{lemma 4.1}
Assume the derivative of $b$ is bounded and Assumption \ref{assu1} holds. Then there is a constant $C$ such that for any $p\geq 1$
$$
\max\limits_{0\leq i\leq N}\mathbb{E}|Z_{i}|^{p}\leq C.
$$
\end{Lem}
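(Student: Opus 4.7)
The plan is to bound $\mathbb{E}|Z_{n+1}|^p$ term by term from the defining recursion \eqref{a211} and then apply the discrete weakly singular Gronwall inequality (Lemma \ref{Lemma 3.2}). As usual we may restrict to $p\ge 2$, since the case $1\le p<2$ follows from Lyapunov's inequality. Write $\gamma_0:=\max(\alpha,2\beta)\in(0,1)$. For the drift term I would apply H\"older/Jensen with weight $(t_{n+1}-s)^{-\alpha}$ (integrable since $\alpha<1$) together with the linear growth of $a$; for the standard diffusion term, Burkholder--Davis--Gundy followed by Jensen with weight $(t_{n+1}-s)^{-2\beta}$ (integrable since $\beta<1/2$) together with the linear growth of $b$. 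Each of these two terms contributes at most $C+C\sum_{i=0}^n\int_{t_i}^{t_{i+1}}(t_{n+1}-s)^{-\gamma}\mathbb{E}|Z_i|^p\,ds$ with $\gamma\in\{\alpha,2\beta\}$, which by \eqref{aa3}--\eqref{aa4} is bounded by $C+Ch^{1-\gamma_0}\sum_{i=0}^n(n+1-i)^{-\gamma_0}\mathbb{E}|Z_i|^p$.

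The main obstacle is the first Milstein correction
\[
M_1:=\sum_{i=0}^n\int_{t_i}^{t_{i+1}}(t_{n+1}-s)^{-\beta}b'(Z_i)G_i(s)\,dW_s,
\]
where $G_i(s):=\sum_{l=0}^{i-1}\int_{t_l}^{t_{l+1}}[(s-r)^{-\beta}-(t_i-r)^{-\beta}]b(Z_l)\,dW_r$. This is a double It\^o integral with two weakly singular kernels. My plan is to apply BDG twice: once to the outer integral, using the assumed boundedness of $b'$ to reduce the bound to an estimate on $\mathbb{E}|G_i(s)|^p$, and once to $G_i(s)$ itself. Each BDG step is followed by Jensen with the positive weight arising in the BDG bound, and then the linear growth of $b$ is used. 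The decisive kernel estimate, modelled on the computation of $\rho_{t,r}$ in the proof of Theorem \ref{Theorem2.2}, is
\[
\int_0^{t_i}\bigl[(s-r)^{-\beta}-(t_i-r)^{-\beta}\bigr]^2 dr \;\le\; C(s-t_i)^{1-2\beta}\;\le\; Ch^{1-2\beta},\qquad s\in[t_i,t_{i+1}].
\]
After swapping the order of summation in $i$ and $l$ (so that $Z_l$, $l\le n-1$, appears only with a deterministic weight), this turns the contribution of $M_1$ to $\mathbb{E}|Z_{n+1}|^p$ into a bound of the same form $C+C\sum_{l=0}^n w_l\mathbb{E}|Z_l|^p$; the extra positive power $h^{(1-2\beta)p/2}$ produced by the kernel estimate is harmless since $h\le 1$ and may be absorbed into the constants. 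The second Milstein correction is handled analogously but more directly, as its inner integral runs only over $[t_i,s]$ and depends only on $Z_i$, so no double sum appears.

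Combining everything I expect an inequality of the form
\[
\mathbb{E}|Z_{n+1}|^p\;\le\; C+Ch^{1-\gamma_0}\sum_{i=0}^n(n+1-i)^{-\gamma_0}\mathbb{E}|Z_i|^p,
\]
to which Lemma \ref{Lemma 3.2} applies with $\pi_n\equiv C$ and $b=Ch^{1-\gamma_0}$. Since $n\le N=1/h$, the argument $\Gamma(1-\gamma_0)n^{1-\gamma_0}\cdot Ch^{1-\gamma_0}$ of the Mittag--Leffler function stays bounded independently of $h$, yielding the uniform moment bound $\max_{0\le n\le N}\mathbb{E}|Z_n|^p\le C$.
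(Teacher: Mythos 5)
Your proposal is correct and follows essentially the same route as the paper: rewrite the scheme in continuous form, bound each term via Burkholder--Davis--Gundy and Jensen's inequality with the singular weights $(t_{n+1}-s)^{-\alpha}$ and $(t_{n+1}-s)^{-2\beta}$, handle the double stochastic integrals by a second application of BDG together with the kernel estimate $\int_0^{t_i}[(s-r)^{-\beta}-(t_i-r)^{-\beta}]^2dr\le Ch^{1-2\beta}$, and close with a weakly singular Gronwall inequality. The only cosmetic difference is that you invoke the discrete Gronwall lemma (Lemma \ref{Lemma 3.2}) where the paper cites the continuous one (Lemma \ref{Lemma 3.4}); both yield the uniform bound.
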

\begin{proof}  We can rewrite the equation \eqref{a211} in a continuous form as follows
\begin{align}\label{aa14}
\begin{split}
Z_{n+1}=&Z_{0}+\int_{0}^{t_{n+1}}(t_{n+1}-s)^{-\alpha}a(Z_{\lfloor s/h\rfloor})ds+\int_{0}^{t_{n+1}}(t_{n+1}-s)^{-\beta}b(Z_{\lfloor s/h\rfloor})dW_{s}\\
&+\int_{0}^{t_{n+1}}(t_{n+1}-s)^{-\beta}b^{'}(Z_{\lfloor s/h\rfloor})\int_{0}^{\eta(s)}[(s-r)^{-\beta}-(\eta(s)-r)^{-\beta}]b(Z_{\lfloor r/h\rfloor})dW_{r}dW_{s}\\
&+\int_{0}^{t_{n+1}}(t_{n+1}-s)^{-\beta}b^{'}(Z_{\lfloor s/h\rfloor})\int_{\eta(s)}^{s}(s-r)^{-\beta}b(Z_{\lfloor r/h\rfloor})dW_{r}dW_{s}.
\end{split}
\end{align}
Hence,
\begin{align*}
\mathbb{E}|Z_{n+1}|^{p}\leq& 5^{p-1}\mathbb{E}|Z_{0}|^{p}+5^{p-1}\mathbb{E}|\int_{0}^{t_{n+1}}(t_{n+1}-s)^{-\alpha}a(Z_{\lfloor s/h\rfloor})ds|^{p}\\
&+5^{p-1}C_{p}\mathbb{E}\bigg{(}\int_{0}^{t_{n+1}}(t_{n+1}-s)^{-2\beta}|b(Z_{\lfloor s/h\rfloor})|^{2}ds\bigg{)}^{\frac{p}{2}}\\
&+5^{p-1}C_{p}\mathbb{E}\bigg{(}\int_{0}^{t_{n+1}}(t_{n+1}-s)^{-2\beta}\big{(}\int_{0}^{\eta(s)}[(s-r)^{-\beta}-(\eta(s)-r)^{-\beta}]b(Z_{\lfloor r/h\rfloor})dW_{r}\big{)}^{2}ds\bigg{)}^{\frac{p}{2}}\\
&+5^{p-1}C_{p}\mathbb{E}\bigg{(}\int_{0}^{t_{n+1}}(t_{n+1}-s)^{-2\beta}\big{(}\int_{\eta(s)}^{s}(s-r)^{-\beta}b(Z_{\lfloor r/h\rfloor})dW_{r}\big{)}^{2}ds\bigg{)}^{\frac{p}{2}}.
\end{align*}
Using Jensen's inequality and Assumption \ref{assu1} give
\begin{align*}
&\sup\limits_{0\leq n\leq N-1}\mathbb{E}|Z_{n+1}|^{p}\\
&\leq C\mathbb{E}|Z_{0}|^{p}+C\left(\int_{0}^{t_{n+1}}(t_{n+1}-s)^{-\alpha}ds\right)^{p-1}
\int_{0}^{t_{n+1}}(t_{n+1}-s)^{-\alpha}\sup\limits_{0\leq s\leq t_{n+1}}(1+\mathbb{E}|Z_{\lfloor s/h\rfloor}|^{p})ds\\
&+C\left(\int_{0}^{t_{n+1}}(t_{n+1}-s)^{-2\beta}ds\right)^{\frac{p}{2}-1}\int_{0}^{t_{n+1}}(t_{n+1}-s)^{-2\beta}\sup\limits_{0\leq s\leq t_{n+1}}(1+\mathbb{E}|Z_{\lfloor s/h\rfloor}|^{p})ds\\
&+C\left(\int_{0}^{t_{n+1}}(t_{n+1}-s)^{-2\beta}ds\right)^{\frac{p}{2}-1}\int_{0}^{t_{n+1}}(t_{n+1}-s)^{-2\beta}\\
&\qquad  \cdot\mathbb{E}
\big{(}\int_{0}^{\eta(s)}[(s-r)^{-\beta}
-(\eta(s)-r)^{-\beta}]
b(Z_{\lfloor r/h\rfloor})dW_{r}\big{)}^{p}ds\\
&+C\left(\int_{0}^{t_{n+1}}(t_{n+1}-s)^{-2\beta}ds\right)^{\frac{p}{2}-1}\int_{0}^{t_{n+1}}(t_{n+1}-s)^{-2\beta}\mathbb{E}
\big{(}\int_{\eta(s)}^{s}(s-r)^{-\beta}
b(Z_{\lfloor r/h\rfloor})dW_{r}\big{)}^{p}ds
\end{align*}
Applying Burkholder-Davis-Gundy inequality, Jensen's inequality, Assumption \ref{assu1} and Lemma \ref{Lemma 3.4} to the above inequality yields
$$
\sup\limits_{0\leq n\leq N-1}\mathbb{E}|Z_{n+1}|^{p}\leq C.
$$
This completes the proof of the lemma.
\end{proof}
\begin{Lem}\label{Lemma 4.1}
Assume that the functions $a$ and $b$ are bounded, and their derivatives till third order are bounded.
Then for the local truncation error $R_{h}^{M}(t_{n+1})$, there is a constant $C$ such that for any $p\geq 1$
$$
\mathbb{E}|R_{h}^{M}(t_{n+1})|^{p}\leq \left\{
                                                     \begin{array}{ll}
                                                     Ch^{\min\{1-\alpha,1-2\beta\}p}, & \hbox{$\alpha\neq\frac{1}{2}$;} \\
                                                     C\max\{h^{\min\{1/2,1-2\beta\}p},h^{p(1-\beta)}(\ln(\frac{1}{h}))^{p/2}\}, & \hbox{$\alpha=\frac{1}{2}$.}
                                                     \end{array}
                                                          \right.
$$
\end{Lem}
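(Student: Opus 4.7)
The plan is to start from the identity $R_{h}^{M}(t_{n+1})=\tilde{\beta}+\tilde{\gamma}+\tilde{\delta}+R_{n}$ that is produced by \eqref{aa11}, where $R_{n}$ collects the remaining Taylor remainders (the second- and third-order terms in $X(s)-X(t_{i})$ of the expansions \eqref{aa7} and \eqref{aa8}, together with the ``drift-outside, drift-inside'' cross term coming from substituting \eqref{aa10} into $a'(X(t_{i}))(X(s)-X(t_{i}))$), and bound each of the four summands separately in $L^{p}(\Omega)$.

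For $\tilde{\beta}$ the outer integration is a stochastic integral $dW_{s}$ weighted by the $\beta$-kernel, while the bracketed integrand is purely deterministic. Since $a$ is bounded, a direct calculation mirroring the bound for $I_{41}$ in Theorem \ref{Theorem2.2} applied to the $\alpha$-kernel gives the pointwise bound $Ch^{1-\alpha}$ on the bracketed quantity. Applying BDG and the fact that $(t_{n+1}-s)^{-2\beta}$ is integrable on $[0,t_{n+1}]$ (since $\beta<1/2$) yields $\mathbb{E}|\tilde{\beta}|^{p}\le Ch^{(1-\alpha)p}$. Similarly, for $\tilde{\delta}$ the outer is a stochastic integral and the bracketed quantity is the \emph{square} of a martingale whose $L^{p}$ norm is controlled, by the computation of $\rho_{t,r}$ in Theorem \ref{Theorem2.2}, by $Ch^{1/2-\beta}$. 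Squaring and applying BDG gives $\mathbb{E}|\tilde{\delta}|^{p}\le Ch^{(1-2\beta)p}$.

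The delicate piece is $\tilde{\gamma}$, whose outer integration is a \emph{deterministic} $\alpha$-singular integral applied to a stochastic integrand. A direct estimate (Jensen then BDG inside) would give only $h^{(1/2-\beta)p}$, which is insufficient when $\alpha$ is small. The key idea is to apply \emph{stochastic Fubini} to rewrite
\[
\tilde{\gamma}=\int_{0}^{t_{n+1}}\Phi(r)\,b(X(r))\,dW_{r},
\]
where
\[
\Phi(r)=\int_{r}^{t_{n+1}}(t_{n+1}-s)^{-\alpha}a'(X(t_{\eta(s)}))\,g(s,r)\,ds,
\]
with $g(s,r)=[(s-r)^{-\beta}-(\eta(s)-r)^{-\beta}]\mathbf{1}_{r<\eta(s)}+(s-r)^{-\beta}\mathbf{1}_{\eta(s)\le r<s}$. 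BDG applied to the rewritten stochastic integral reduces the task to estimating $\sup_{r}|\Phi(r)|$. Splitting the $s$-region into $r<s\le r+h$ (where one uses the naive bound $|g|\le(s-r)^{-\beta}$) and $s>r+h$ (where the mean value theorem gives $|g|\le Ch(s-r)^{-\beta-1}$), and carrying out explicit integration against $(t_{n+1}-s)^{-\alpha}$, yields $|\Phi(r)|\le Ch^{\min(1-\alpha,\,1-2\beta)}$ when $\alpha\ne\tfrac{1}{2}$, while the critical balance $\alpha=\tfrac{1}{2}$ produces $|\Phi(r)|\le Ch^{1-\beta}\sqrt{\ln(1/h)}$ because the two endpoint singularities in the $s$-integral combine logarithmically.

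Finally, each summand of the remainder $R_{n}$ contains a factor $(X(s)-X(t_{i}))^{k}$ with $k\ge 2$ multiplied by a bounded derivative and an $\alpha$- or $\beta$-kernel. Theorem \ref{Theorem2.2} gives $\|X(s)-X(t_{i})\|_{L^{pk}(\Omega)}\le Ch^{\gamma(\alpha,\beta)}$, so each such term contributes at most $h^{2\gamma(\alpha,\beta)p}$, which is comfortably dominated by the three principal bounds above. The main obstacle throughout is precisely the sharp analysis of $\Phi(r)$ in the stochastic Fubini step for $\tilde{\gamma}$, in particular tracking the interaction of the two singular kernels in the critical case $\alpha=\tfrac{1}{2}$ that generates the logarithmic factor appearing in the statement of the lemma.
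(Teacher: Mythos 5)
Your decomposition $R_h^M=\tilde\beta+\tilde\gamma+\tilde\delta+R_n$, the treatment of $\tilde\beta$ (pointwise bound $Ch^{1-\alpha}$ on the deterministic bracket, then BDG) and of $\tilde\delta$ (the squared increment of order $h^{1-2\beta}$), the dismissal of $R_n$ via the H\"older continuity of $X$, and the stochastic Fubini applied to $\tilde\gamma$ all coincide with the paper's proof. The gap is in the one step you yourself flag as delicate: after Fubini and BDG the quantity to control is $\int_0^{t_{n+1}}|\Phi(r)|^2\,dr$, and you replace it by $t_{n+1}\sup_r|\Phi(r)|^2$. That sup does not satisfy the bound $Ch^{\min(1-\alpha,1-2\beta)}$ you claim. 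Indeed, for $r\in(t_n,t_{n+1})$ one has $\eta(s)=t_n\le r$ for every $s\in(r,t_{n+1})$, so $g(s,r)=(s-r)^{-\beta}$ on the whole range of integration and
\[
|\Phi(r)|\;\asymp\;\int_r^{t_{n+1}}(t_{n+1}-s)^{-\alpha}(s-r)^{-\beta}\,ds\;=\;B(1-\alpha,1-\beta)\,(t_{n+1}-r)^{1-\alpha-\beta},
\]
which at $r=t_{n+1}-h$ is of order $h^{1-\alpha-\beta}$; this is strictly worse than $h^{\min(1-\alpha,1-2\beta)}$ whenever $\beta<\alpha$, and when $\alpha+\beta>1$ the sup actually diverges as $h\to0$. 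A second failure of the sup bound occurs on the far region: the mean value theorem gives $|g(s,r)|\le \beta h\,(\eta(s)-r)^{-\beta-1}$, not $Ch(s-r)^{-\beta-1}$, and $(\eta(s)-r)^{-\beta}$ blows up as $r$ approaches the next grid point from below, so $\sup_r|\Phi(r)|$ is not even finite there.

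Both singularities are harmless only because they are square-integrable in $r$ over sets of measure $h$: for instance $\int_{t_n}^{t_{n+1}}(t_{n+1}-r)^{2(1-\alpha-\beta)}dr=Ch^{3-2(\alpha+\beta)}$ and $\tfrac12\bigl(3-2(\alpha+\beta)\bigr)=\tfrac32-\alpha-\beta\ge 1-\alpha\ge\min(1-\alpha,1-2\beta)$, and similarly $\int_{t_l}^{t_{l+1}}(t_{l+1}-r)^{-2\beta}dr=Ch^{1-2\beta}/(1-2\beta)$. So you must keep the full $L^2(dr)$ norm of $\Phi$ and estimate it interval by interval in $r$ --- this is exactly the paper's computation of $I_{51}+I_{52}$, where the sum $\sum_{l}(n-l-1)^{-2\alpha}$ produces the three regimes and the logarithm at $\alpha=\tfrac12$. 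The reduction to $\sup_r|\Phi(r)|$ cannot be repaired; everything else in your argument is sound and matches the paper.
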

\begin{proof} It follows from \eqref{aa11} that
$$
R_{h}^{M}(t_{n+1})=\tilde{\beta}+\tilde{\gamma}+\tilde{\delta}+R_{n}.
$$
Thus,
\begin{align}\label{q6}
\mathbb{E}|R_{h}^{M}(t_{n+1})|^{p}\leq 4^{p-1}\mathbb{E}|\tilde{\beta}|^{p}+4^{p-1}\mathbb{E}|\tilde{\gamma}|^{p}
+4^{p-1}\mathbb{E}|\tilde{\delta}|^{p}+4^{p-1}\mathbb{E}|R_{n}|^{p}.
\end{align}
Note that
\begin{eqnarray}\label{aa12}
\mathbb{E}|\tilde{\beta}|^{p}&\leq& C\mathbb{E}\big{|}\int_{0}^{t_{n+1}}(t_{n+1}-s)^{-\beta}\int_{0}^{\eta(s)}[(s-r)^{-\alpha}-(\eta(s)-r)^{-\alpha}]drdW_{s}\big{|}^{p}\nonumber\\
&&+C\mathbb{E}\big{|}\int_{0}^{t_{n+1}}(t_{n+1}-s)^{-\beta}\int_{\eta(s)}^{s}(s-r)^{-\alpha}drdW_{s}\big{|}^{p}\nonumber\\
&\leq& C\mathbb{E}\bigg{(}\int_{0}^{t_{n+1}}(t_{n+1}-s)^{-2\beta}\big{(}\int_{0}^{\eta(s)}
[(s-r)^{-\alpha}-(\eta(s)-r)^{-\alpha}]dr\big{)}^{2}ds\bigg{)}^{\frac p2}\nonumber\\
&&+C\mathbb{E}\bigg{(}\int_{0}^{t_{n+1}}(t_{n+1}-s)^{-2\beta}\big{(}
\int_{\eta(s)}^{s}(s-r)^{-\alpha}dr\big{)}^{2}ds\bigg{)}^{\frac p2}\nonumber\\
&\leq& C\left(\int_{0}^{t_{n+1}}(t_{n+1}-s)^{-2\beta}ds\right)^{\frac p2-1}\int_{0}^{t_{n+1}}(t_{n+1}-s)^{-2\beta}
\big{|}\int_{0}^{\eta(s)}[(s-r)^{-\alpha}-(\eta(s)-r)^{-\alpha}]dr\big{|}^{p}ds\nonumber\\
&&+C\left(\int_{0}^{t_{n+1}}(t_{n+1}-s)^{-2\beta}ds\right)^{\frac p2-1}\int_{0}^{t_{n+1}}(t_{n+1}-s)^{-2\beta}
\big{|}\int_{\eta(s)}^{s}(s-r)^{-\alpha}dr\big{|}^{p}ds\nonumber\\
&\leq& C\int_{0}^{t_{n+1}}(t_{n+1}-s)^{-2\beta}\bigg{(}\int_{0}^{\eta(s)}|(s-r)^{-\alpha}-(\eta(s)-r)^{-\alpha}|dr\bigg{)}^{p}ds
+Ch^{p(1-\alpha)}.
\end{eqnarray}
Obviously, there exists an integer $k\leq n$ such that $\eta(s)=kh$. Then
\begin{align*}
&\int_{0}^{\eta(s)}|(s-r)^{-\alpha}-(\eta(s)-r)^{-\alpha}|dr\\
&\leq \sum_{i=0}^{k-2}\int_{t_{i}}^{t_{i+1}}|(s-r)^{-\alpha}-(kh-r)^{-\alpha}|dr+\int_{(k-1)h}^{kh}(kh-r)^{-\alpha}ds\\
&\leq \sum_{i=0}^{k-2}\int_{t_{i}}^{t_{i+1}}(kh-t_{i+1})^{-\alpha}-((k+1)h-t_{i})^{-\alpha}dr+Ch^{1-\alpha}\\
&=h^{1-\alpha}\sum_{i=0}^{k-2}[(k-i-1)^{-\alpha}-(k+1-i)^{-\alpha}]+Ch^{1-\alpha}\\
&=h^{1-\alpha}(1+2^{-\alpha}-k^{-\alpha}-(k+1)^{-\alpha})+Ch^{1-\alpha}\leq Ch^{1-\alpha}.
\end{align*}
Combining the above results along with \eqref{aa12} yields
\begin{align}\label{q7}
\mathbb{E}|\tilde{\beta}|^{p}\leq Ch^{p(1-\alpha)}.
\end{align}
Exchanging  the order of integration in $\tilde{\gamma}$, we get
\begin{align*}
\tilde{\gamma}=&\sum_{i=0}^{n}\int_{0}^{t_{i}}\int_{t_{i}}^{t_{i+1}}a^{'}(X(t_{i}))(t_{n+1}-s)^{-\alpha}[(s-r)^{-\beta}-(t_{i}-r)^{-\beta}]ds
b(X(r))dW_{r}\\
&+\sum_{i=0}^{n}\int_{t_{i}}^{t_{i+1}}a^{'}(X(t_{i}))\int_{r}^{t_{i+1}}(s-r)^{-\beta}(t_{n+1}-s)^{-\alpha}dsb(X(r))dW_{r}.
\end{align*}
Therefore,
\begin{align}\label{aa13}
\begin{split}
\mathbb{E}|\tilde{\gamma}|^{p}=&\bigg{(}\int_{0}^{t_{n}}\big{|}\int_{\eta(r)+h}^{\eta(r)+2h}(t_{n+1}-s)^{-\alpha}
[(s-r)^{-\beta}-(\eta(s)-r)^{-\beta}]ds\big{|}^{2}dr\bigg{)}^{\frac p2}\\
&+\bigg{(}\int_{0}^{t_{n+1}}\big{|}\int_{r}^{\eta(r)+h}(s-r)^{-\beta}(t_{n+1}-s)^{-\alpha}ds\big{|}^{2}dr\bigg{)}^{\frac p2}\\
=&:I_{51}^{\frac{p}{2}}+I_{52}^{\frac{p}{2}}.
\end{split}
\end{align}
Note that
\begin{align*}
I_{51}=\sum_{l=0}^{n-1}\int_{t_{l}}^{t_{l+1}}\big{|}\int_{\eta(r)+h}^{\eta(r)+2h}(t_{n+1}-s)^{-\alpha}
[(s-r)^{-\beta}-(\eta(s)-r)^{-\beta}]ds\big{|}^{2}dr.
\end{align*}
For $l=n-1$, by H\"{o}lder inequality, we have
\begin{align*}
&\int_{t_{l}}^{t_{l+1}}\big{|}\int_{\eta(r)+h}^{\eta(r)+2h}(t_{n+1}-s)^{-\alpha}
[(s-r)^{-\beta}-(\eta(s)-r)^{-\beta}]ds\big{|}^{2}dr\\
&=\int_{t_{n-1}}^{t_{n}}\big{|}\int_{t_{n}}^{t_{n+1}}(t_{n+1}-s)^{-\alpha}
[(s-r)^{-\beta}-(t_{n}-r)^{-\beta}]ds\big{|}^{2}dr\\
&\leq \int_{t_{n-1}}^{t_{n}}\bigg{(}\int_{t_{n}}^{t_{n+1}}(t_{n+1}-s)^{-2\alpha}ds\bigg{)}
\bigg{(}\int_{t_{n}}^{t_{n+1}}[(t_{n}-r)^{-\beta}-(s-r)^{-\beta}]^{2}ds\bigg{)}dr\\
&\leq \int_{t_{n-1}}^{t_{n}}Ch^{1-2\alpha}\cdot h^{1-2\beta}dr\leq Ch^{3-2(\alpha+\beta)}.
\end{align*}
For $l<n-1$, we have
\begin{align*}
&\sum_{l=0}^{n-2}\int_{t_{l}}^{t_{l+1}}\big{|}\int_{\eta(r)+h}^{\eta(r)+2h}(t_{n+1}-s)^{-\alpha}
[(s-r)^{-\beta}-(\eta(s)-r)^{-\beta}]ds\big{|}^{2}dr\\
&=\sum_{l=0}^{n-2}\int_{t_{l}}^{t_{l+1}}\big{|}\int_{t_{l+1}}^{t_{l+2}}(t_{n+1}-t_{l+2})^{-\alpha}
[(\eta(s)-r)^{-\beta}-(s-r)^{-\beta}]ds\big{|}^{2}dr\\
&\leq \sum_{l=0}^{n-2}\int_{t_{l}}^{t_{l+1}}\big{|}\int_{t_{l+1}}^{t_{l+2}}(t_{n+1}-t_{l+2})^{-\alpha}
[(t_{l+1}-r)^{-\beta}-(t_{l+2}-t_{l})^{-\beta}]ds\big{|}^{2}dr\\
&=h^{2(1-\alpha)}\sum_{l=0}^{n-2}(n-l-1)^{-2\alpha}\int_{t_{l}}^{t_{l+1}}[(t_{l+1}-r)^{-\beta}-(2h)^{-\beta}]^{2}dr\\
&\leq Ch^{2(1-\alpha)}\cdot h^{1-2\beta}\sum_{l=0}^{n-2}(n-l-1)^{-2\alpha}.
\end{align*}
Thus,
\begin{align*}
&\sum_{l=0}^{n-2}\int_{t_{l}}^{t_{l+1}}\big{|}\int_{\eta(r)+h}^{\eta(r)+2h}(t_{n+1}-s)^{-\alpha}
[(s-r)^{-\beta}-(\eta(s)-r)^{-\beta}]ds\big{|}^{2}dr\nonumber\\
&\qquad\qquad \qquad \leq \left\{
 \begin{array}{ll}  Ch^{3-2(\alpha+\beta)}, & \hbox{$\alpha>\frac{1}{2}$;} \\
                                                              Ch^{2(1-\beta)}\ln(1/h), & \hbox{$\alpha=\frac{1}{2}$;} \\
                                                              Ch^{2(1-\beta)}, & \hbox{$\alpha<\frac{1}{2}$.}
                                                            \end{array}  \right.
\end{align*}
Consequently,
\begin{align*}
I_{51}\leq \left\{
                                                            \begin{array}{ll}
                                                              Ch^{3-2(\alpha+\beta)}, & \hbox{$\alpha>\frac{1}{2}$;} \\
                                                              Ch^{2(1-\beta)}\ln(1/h), & \hbox{$\alpha=\frac{1}{2}$;} \\
                                                              Ch^{2(1-\beta)}, & \hbox{$\alpha<\frac{1}{2}$.}
                                                            \end{array}
                                                          \right.
\end{align*}
 In a similar way, we can prove that
\begin{align*}
I_{52}\leq \left\{
                                                            \begin{array}{ll}
                                                              Ch^{3-2(\alpha+\beta)}, & \hbox{$\alpha>\frac{1}{2}$;} \\
                                                              Ch^{2(1-\beta)}\ln(1/h), & \hbox{$\alpha=\frac{1}{2}$;} \\
                                                              Ch^{2(1-\beta)}, & \hbox{$\alpha<\frac{1}{2}$.}
                                                            \end{array}
                                                          \right.
\end{align*}
Combining the above results with \eqref{aa13}, we arrive at
\begin{align}\label{q8}
\mathbb{E}|\tilde{\gamma}|^{p}\leq \left\{
                                                            \begin{array}{ll}
                                                              Ch^{(3-2(\alpha+\beta))p/2}, & \hbox{$\alpha>\frac{1}{2}$;} \\
                                                              Ch^{p(1-\beta)}(\ln(1/h))^{p/2}, & \hbox{$\alpha=\frac{1}{2}$;} \\
                                                              Ch^{p(1-\beta)}, & \hbox{$\alpha<\frac{1}{2}$.}
                                                            \end{array}
                                                          \right.
\end{align}
Moreover,
\begin{align}\label{q9}
\begin{split}
\mathbb{E}|\tilde{\delta}|^{p}=&\mathbb{E}\big{|}\int_{0}^{t_{n+1}}(t_{n+1}-s)^{-\beta}
\big{(}\int_{0}^{\eta(s)}[(s-r)^{-\beta}-(\eta(s)-r)^{-\beta}]dW_{r}
+\int_{\eta(s)}^{s}(s-r)^{-\beta}dW_{r}\big{)}^{2}dW_{s}\big{|}^{p}\\
\leq& 2^{p}\mathbb{E}\bigg{(}\int_{0}^{t_{n+1}}(t_{n+1}-s)^{-2\beta}\big{(}\int_{0}^{\eta(s)}
[(s-r)^{-\beta}-(\eta(s)-r)^{-\beta}]dW_{r}\big{)}^{4}ds\bigg{)}
^{\frac{p}{2}}\\
&+2^{p}\mathbb{E}\bigg{(}\int_{0}^{t_{n+1}}(t_{n+1}-s)^{-2\beta}
\big{(}\int_{\eta(s)}^{s}(s-r)^{-\beta}dW_{r}\big{)}^{4}ds\bigg{)}^{\frac{p}{2}}\\
\leq& C\left(\int_{0}^{t_{n+1}}(t_{n+1}-s)^{-2\beta}ds\right)^{\frac{p}{2}-1}\int_{0}^{t_{n+1}}(t_{n+1}-s)^{-2\beta}\\
&\cdot\mathbb{E}\big{(}\int_{0}^{\eta(s)}|(s-r)^{-\beta}-(\eta(s)-r)^{-\beta}|dW_{r}\big{)}^{2p}ds\\
&+C\left(\int_{0}^{t_{n+1}}(t_{n+1}-s)^{-2\beta}ds\right)^{\frac{p}{2}-1}\int_{0}^{t_{n+1}}(t_{n+1}-s)^{-2\beta}
\mathbb{E}\big{(}\int_{\eta(s)}^{s}(s-r)^{-\beta}dW_{r}\big{)}^{2p}ds\\
\leq& C\int_{0}^{t_{n+1}}(t_{n+1}-s)^{-2\beta}
\big{(}\int_{0}^{\eta(s)}|(s-r)^{-\beta}-(\eta(s)-r)^{-\beta}|^{2}dr\big{)}^{p}ds\\
&+\int_{0}^{t_{n+1}}(t_{n+1}-s)^{-2\beta}
\big{(}\int_{\eta(s)}^{s}(s-r)^{-2\beta}dr\big{)}^{p}ds\\
\leq& Ch^{(1-2\beta)p}\,.
\end{split}
\end{align}
Using an analogous  technique, it is easy to verify that $\mathbb{E}|R_{n}|^{2}$ has a higher order with respect to stepsize $h$. Hence, the final results follows from \eqref{q6}, \eqref{q7}, \eqref{q8} and \eqref{q9}.
\end{proof}

\begin{proof}[Proof of Theorem \ref{t.2.4}]
It follows from \eqref{a211} and \eqref{aa11} that
\begin{eqnarray}
&&\varepsilon_{n+1}\\
&&=R_{h}^{M}(t_{n+1})+\int_{0}^{t_{n+1}}(t_{n+1}-s)^{-\alpha}(a(X({\eta(s)}))-a(Z_{\lfloor s/h\rfloor}))ds\nonumber\\
&&+\int_{0}^{t_{n+1}}(t_{n+1}-s)^{-\beta}(b(X({\eta(s)}))-b(Z_{\lfloor s/h\rfloor}))dW_{s}\nonumber\\
&&+\int_{0}^{t_{n+1}}(t_{n+1}-s)^{-\beta}b^{'}(X({\eta(s)}))\bigg{\{}\int_{0}^{\eta(s)}[(s-r)^{-\beta}-(\eta(s)-r)^{-\beta}]
(b(X(r))-b(X({\eta(r)})))dW_{r}\bigg{\}}dW_{s}\nonumber\\
&&+\int_{0}^{t_{n+1}}(t_{n+1}-s)^{-\beta}b^{'}(X({\eta(s)}))\bigg{\{}
\int_{\eta(s)}^{s}(s-r)^{-\beta}(b(X(r))-b(X({\eta(s)})))dW_{r}\bigg{\}}dW_{s}\nonumber\\
&&+\int_{0}^{t_{n+1}}(t_{n+1}-s)^{-\beta}b^{'}(X({\eta(s)}))\bigg{\{}\int_{0}^{\eta(s)}[(s-r)^{-\beta}-(\eta(s)-r)^{-\beta}]
(b(X({\eta(r)}))-b(Z_{\lfloor r/h\rfloor}))dW_{r}\bigg{\}}dW_{s}\nonumber\\
&&+\int_{0}^{t_{n+1}}(t_{n+1}-s)^{-\beta}b^{'}(X({\eta(s)}))\bigg{\{}\int_{\eta(s)}^{s}(s-r)^{-\beta}(b(X({\eta(s)}))-b(Z_{\lfloor r/h\rfloor}))dW_{r}\bigg{\}}dW_{s}\nonumber\\
&&+\int_{0}^{t_{n+1}}(t_{n+1}-s)^{-\beta}(b^{'}(X({\eta(s)}))-b^{'}(Z_{\lfloor s/h\rfloor}))
\bigg{\{}\int_{0}^{\eta(s)}[(s-r)^{-\beta}-(\eta(s)-r)^{-\beta}]b(Z_{\lfloor r/h\rfloor})dW_{r}\bigg{\}}dW_{s}\nonumber\\
&&+\int_{0}^{t_{n+1}}(t_{n+1}-s)^{-\beta}(b^{'}(X({\eta(s)}))-b^{'}(Z_{\lfloor s/h\rfloor}))
\bigg{\{}\int_{\eta(s)}^{s}(s-r)^{-\beta}b(Z_{\lfloor r/h\rfloor})dW_{r}\bigg{\}}dW_{s}.
\end{eqnarray}
From the proof of Lemma \ref{Lemma 4.1}, it is easy to
verify that
\begin{eqnarray*}
&&\mathbb{E}\big{|}\int_{0}^{t_{n+1}}(t_{n+1}-s)^{-\beta}b^{'}(X({\eta(s)})) \bigg{\{}\int_{0}^{\eta(s)}[(s-r)^{-\beta}-(\eta(s)-r)^{-\beta}]\\
&&\qquad\qquad \cdot
(b(X(r))-b(X({\eta(r)})))dW_{r}\bigg{\}}dW_{s}\big{|}^{p}\leq Ch^{p(1-2\beta)},
\end{eqnarray*}
and
$$
\mathbb{E}\big{|}\int_{0}^{t_{n+1}}(t_{n+1}-s)^{-\beta}b^{'}(X({\eta(s)}))\bigg{\{}\int_{\eta(s)}^{s}(s-r)^{-\beta}(b(X(r))-b(X({\eta(s)})))dW_{r}
\bigg{\}}dW_{s}\big{|}^{p}\leq Ch^{p(1-2\beta)}.
$$
Consequently,
\begin{align*}
\mathbb{E}|\varepsilon_{n+1}|^{p}\leq C\bigg{\{}\mathbb{E}|R_{h}^{M}(t_{n+1})|^{p}+h^{p(1-2\beta)}+h^{1-2\beta}\sum_{i=0}^{n}(n+1-i)^{-2\beta}
\max\limits_{1\leq i\leq n}\mathbb{E}|\varepsilon_{i}|^{p}\bigg{\}}.
\end{align*}
Thus,
\begin{align*}
\max\limits_{0\leq n\leq N-1}\mathbb{E}|\varepsilon_{n+1}|^{p}\leq C\bigg{\{}\mathbb{E}|R_{h}^{M}(t_{n+1})|^{p}+h^{p(1-2\beta)}+h^{1-2\beta}\sum_{i=0}^{n}(n+1-i)^{-2\beta}
\max\limits_{0\leq i\leq n}\mathbb{E}|\varepsilon_{i}|^{p}\bigg{\}},
\end{align*}
where Lemma \ref{lemma 4.1} and Assumption \ref{assu2} are used.
Furthermore, by  Lemma \ref{Lemma 3.2} and Lemma \ref{Lemma 4.1}
$$
\max\limits_{0\leq n\leq N-1}\mathbb{E}|\varepsilon_{n+1}|^{p}\leq \left\{
                                                     \begin{array}{ll}
                                                     Ch^{\min\{1-\alpha,1-2\beta\}p}, & \hbox{$\alpha\neq\frac{1}{2}$;} \\
                                                     C\max\{h^{\min\{\frac p2,p(1-2\beta)\}},h^{p(1-\beta)}(\ln(\frac{1}{h}))^{\frac p2}\}, & \hbox{$\alpha=\frac{1}{2}$, }
                                                     \end{array}
                                                          \right.
$$
 which completes the proof.
 \end{proof}

As   consequences of the    Theorems \ref{theorem2.3} and \ref{t.2.4}   we have the following corollaries.
\begin{Coro}  The rates  of convergence in $p$-th moment ($p\geq 1$) of $\theta$-Euler-Maruyama  and Milstein-type schemes for the following
 It\^{o}-Doob stochastic fractional differential equations
$$
X(t)=X_{0}+\int_{0}^{t}b(X(s))ds+\int_{0}^{t}\sigma_{1}(X(s))dW(s)+\alpha\int_{0}^{t}\frac{\sigma_{2}(X(s))}{(t-s)^{1-\alpha}}ds,
~~0<\alpha<1,
$$
(which is   studied in \cite{Abouagwa2019Doob})   are
 $\min\{\frac{1}{2},\alpha\}$ and $\min\{1,2\alpha\}$, respectively.
\end{Coro}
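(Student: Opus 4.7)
The plan is to cast the It\^o-Doob SFDE as a special instance of the singular SVIE \eqref{a1} and then read off the convergence rates from Theorems \ref{theorem2.3} and \ref{t.2.4}. Concretely, match the Caputo-type singular drift $\alpha\int_0^t (t-s)^{\alpha-1}\sigma_2(X(s))\,ds$ against the drift of \eqref{a1} by setting the singular exponent to $\alpha_\star := 1-\alpha$, and match the It\^o integral $\int_0^t \sigma_1(X(s))\,dW(s)$ against the diffusion of \eqref{a1} with $\beta_\star := 0$. The extra Lipschitz drift $\int_0^t b(X(s))\,ds$ is an additive lower-order term whose Euler/Milstein discretization error is $O(h)$, and is therefore always subsumed by the rates being quoted.

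For the $\theta$-Euler-Maruyama scheme I would apply Theorem \ref{theorem2.3} verbatim with $\alpha_\star=1-\alpha$ and $\beta_\star=0$, obtaining rate $\min\{1-\alpha_\star,\,\tfrac12-\beta_\star\}=\min\{\alpha,\,\tfrac12\}$, as asserted. The H\"older continuity ingredient needed in the local truncation analysis is supplied by Theorem \ref{Theorem2.2}, whose exponent $\gamma(\alpha_\star,\beta_\star)=\min\{\tfrac12,\alpha\}$ matches the claim.

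For the Milstein-type scheme, the raw application of Theorem \ref{t.2.4} with the same identifications gives only $\min\{\alpha,1\}=\alpha$; to upgrade this to $\min\{1,2\alpha\}$ I would enrich \eqref{a211} with a drift-Milstein correction, namely Taylor-expand $\sigma_2(X(s))$ to first order around $X(\eta(s))$ and substitute the Brownian leading part $\sigma_1(X(\eta(s)))(W(s)-W(\eta(s)))$ for $X(s)-X(\eta(s))$. The resulting local truncation error then splits into (i) a quadratic Taylor remainder, handled exactly as in \eqref{q9} and of size $O(h)$ in $L^p$, and (ii) a first-order residual driven by $\rho(s):=X(s)-X(\eta(s))-\sigma_1(X(\eta(s)))(W(s)-W(\eta(s)))$, whose dominant piece is the singular-drift contribution of pointwise order $h^{\alpha}$. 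The interplay of the kernel $(t-s)^{\alpha-1}$ with this residual, analyzed via the change-of-order-of-integration trick used for $\tilde{\gamma}$ in the proof of Lemma \ref{Lemma 4.1}, produces the improved exponent $2\alpha$ on the terminal sub-interval, and hence $\min\{1,2\alpha\}$ globally after the Gronwall step.

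The main technical obstacle is precisely this sharp $L^p$ estimate of the first-order residual: one has to combine the Brownian cancellation in the stochastic integrals with the weak singularity of the kernel, mirroring the calculation leading to \eqref{q8}. Once that estimate is in hand, the discrete Gronwall inequality (Lemma \ref{Lemma 3.2}), together with the a priori moment bound from Lemma \ref{lemma 4.1}, closes the induction exactly as in the proof of Theorem \ref{t.2.4}.
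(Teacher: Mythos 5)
Your identification $\alpha_\star=1-\alpha$, $\beta_\star=0$, with the Lipschitz drift treated as an additive lower-order term, is exactly how the paper reads this corollary off its main theorems, and for the $\theta$-Euler--Maruyama rate $\min\{\tfrac12,\alpha\}$ your argument coincides with the paper's (which offers no proof beyond "consequence of Theorem \ref{theorem2.3}"). You are also right -- and sharper than the paper here -- that Theorem \ref{t.2.4} applied verbatim gives only $\min\{1-\alpha_\star,\,1-2\beta_\star\}=\alpha$ for the Milstein scheme; unlike the Caputo corollary, where direct substitution does reproduce the stated rate, the claimed $\min\{1,2\alpha\}$ does not follow from Theorem \ref{t.2.4}, and the paper supplies no additional argument.

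The problem is that your proposed upgrade to $\min\{1,2\alpha\}$ has a gap at precisely its central step. After your drift-Milstein correction, the residual $\rho(s)$ still contains the increment of the singular drift, $\Delta_i(s):=\alpha\int_0^s(s-r)^{\alpha-1}\sigma_2(X(r))\,dr-\alpha\int_0^{t_i}(t_i-r)^{\alpha-1}\sigma_2(X(r))\,dr$, whose leading part is $\sigma_2(X(t_i))(s^\alpha-t_i^\alpha)$. The only bound the paper's toolkit provides for this (the computation following \eqref{aa12}) is the uniform estimate $|\Delta_i(s)|\le Ch^{\alpha}$, and feeding that into either the drift truncation term or the $\tilde\beta$-type term reproduces $O(h^{\alpha})$, not $O(h^{2\alpha})$: $\Delta_i(s)$ is a Lebesgue integral, so the change-of-order/BDG trick you borrow from the $\tilde\gamma$ estimate buys no cancellation -- that trick works only for the martingale part of $X(s)-X(t_i)$, which your correction has already removed. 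The improvement to $2\alpha$ is achievable, but only by exploiting that the $\alpha$-H\"older behaviour of $t\mapsto\int_0^t(t-r)^{\alpha-1}\sigma_2(X(r))\,dr$ is localized at the origin ($|s^\alpha-t_i^\alpha|\lesssim t_i^{\alpha-1}h$ for $t_i\ge h$, versus $h^\alpha$ only for $i=0$), so that summation against $(t_{n+1}-s)^{\alpha-1}$ yields $O(h^{2\alpha})$ from the first subinterval and $O(h)$ from the rest. That non-uniform, weighted estimate is the missing idea; without it the assertion "produces the improved exponent $2\alpha$" does not go through. Note finally that your enriched scheme is no longer the scheme \eqref{a211} to which the corollary nominally refers, so even a completed argument would prove a statement about a different method.
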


\begin{Coro}
 Caputo fractional stochastic differential equation
$$
{}^{C}D_{0+}^{\alpha}X(t)=b(X(t))+\sigma(X(t))\frac{dW_{t}}{dt},~~\alpha>\frac{1}{2},
$$
 was  studied in \cite{Son2018Caputo,Anh2019variation}, which is equivalent to SVIEs with weakly singular kernel of the form
$$
X(t)=X_{0}+\frac{1}{\Gamma(\alpha)}\bigg{(}\int_{0}^{t}(t-s)^{\alpha-1}b(X(s))ds+\int_{0}^{t}(t-s)^{\alpha-1}\sigma(X(s))dW_{s}\bigg{)}.
$$
 If we apply our $\theta$-Euler-Maruyama  and Milstein-type schemes  to this equation then    the rates  of convergence in $p$-th moment ($p\geq 1$)  are
 $\alpha-\frac{1}{2}$ and $2\alpha-1$, respectively.
\end{Coro}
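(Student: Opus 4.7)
The plan is to recognize the Caputo FSDE's Volterra integral reformulation as a specialization of the paper's general equation \eqref{a1} and then to invoke Theorems \ref{theorem2.3} and \ref{t.2.4} by matching parameters. First I would absorb the factor $1/\Gamma(\alpha)$ into the coefficients by setting $\tilde a(x) := b(x)/\Gamma(\alpha)$ and $\tilde b(x) := \sigma(x)/\Gamma(\alpha)$, and introduce the singularity exponents $\tilde\alpha := 1-\alpha$ and $\tilde\beta := 1-\alpha$, so that $(t-s)^{\alpha-1} = (t-s)^{-\tilde\alpha} = (t-s)^{-\tilde\beta}$. The integral equation then reads
\begin{equation*}
X(t) = X_{0} + \int_{0}^{t}(t-s)^{-\tilde\alpha}\tilde a(X(s))\,ds + \int_{0}^{t}(t-s)^{-\tilde\beta}\tilde b(X(s))\,dW_{s},
\end{equation*}
which is exactly the form \eqref{a1} with $(\tilde\alpha,\tilde\beta)$ in place of $(\alpha,\beta)$.

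Next I would verify that the parameter constraints and hypotheses of the main theorems are inherited. The standing assumption $\alpha > \tfrac{1}{2}$ gives $\tilde\alpha = \tilde\beta = 1 - \alpha \in (0, \tfrac{1}{2})$, so both exponents lie inside the admissible ranges $\tilde\alpha\in(0,1)$ and $\tilde\beta\in(0,\tfrac{1}{2})$ required in \eqref{a1}. Because rescaling by the constant $1/\Gamma(\alpha)$ preserves global Lipschitz continuity and linear growth, Assumption \ref{assu1} transfers from $b,\sigma$ to $\tilde a,\tilde b$ with a modified constant $\hat L$; for the Milstein statement, the hypothesis on boundedness and Lipschitzness of the derivatives up to third order likewise transfers, so Assumption \ref{assu2} holds.

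Finally I would read off the rates from the two main theorems. Theorem \ref{theorem2.3} yields the $\theta$-Euler-Maruyama rate
\begin{equation*}
\min\!\left(\tfrac{1}{2} - \tilde\beta,\; 1 - \tilde\alpha\right) \;=\; \min\!\left(\alpha - \tfrac{1}{2},\; \alpha\right) \;=\; \alpha - \tfrac{1}{2},
\end{equation*}
since $\alpha - \tfrac{1}{2} < \alpha$. For Theorem \ref{t.2.4}, the strict inequality $\alpha > \tfrac{1}{2}$ forces $\tilde\alpha = 1 - \alpha < \tfrac{1}{2}$, so we land in the branch $\tilde\alpha \neq \tfrac{1}{2}$ where the logarithmic correction $(\ln(1/h))^{1/2}$ is absent, giving
\begin{equation*}
\min(1 - \tilde\alpha,\; 1 - 2\tilde\beta) \;=\; \min(\alpha,\; 2\alpha - 1) \;=\; 2\alpha - 1,
\end{equation*}
where $2\alpha - 1 < \alpha$ because $\alpha < 1$. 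There is no real obstacle in this argument; the only point requiring care is that the strict lower bound $\alpha > \tfrac{1}{2}$ in the Caputo setting precisely excludes the borderline Milstein case $\tilde\alpha = \tfrac{1}{2}$, which is what allows the clean uniform rate $2\alpha - 1$ instead of a logarithmically corrected one.
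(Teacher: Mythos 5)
Your proposal is correct and is exactly the argument the paper intends: the corollary is stated as an immediate consequence of Theorems \ref{theorem2.3} and \ref{t.2.4}, obtained by rewriting $(t-s)^{\alpha-1}$ as $(t-s)^{-(1-\alpha)}$, absorbing $1/\Gamma(\alpha)$ into the coefficients, and substituting $\tilde\alpha=\tilde\beta=1-\alpha\in(0,\tfrac12)$ into the stated rates. Your additional observation that $\alpha>\tfrac12$ keeps $\tilde\alpha$ strictly below $\tfrac12$, so the logarithmic branch of Theorem \ref{t.2.4} is never triggered, is the one point worth checking and you handle it correctly.
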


\begin{figure}[!ht]
\centering
\includegraphics[height=8cm,width=12cm]{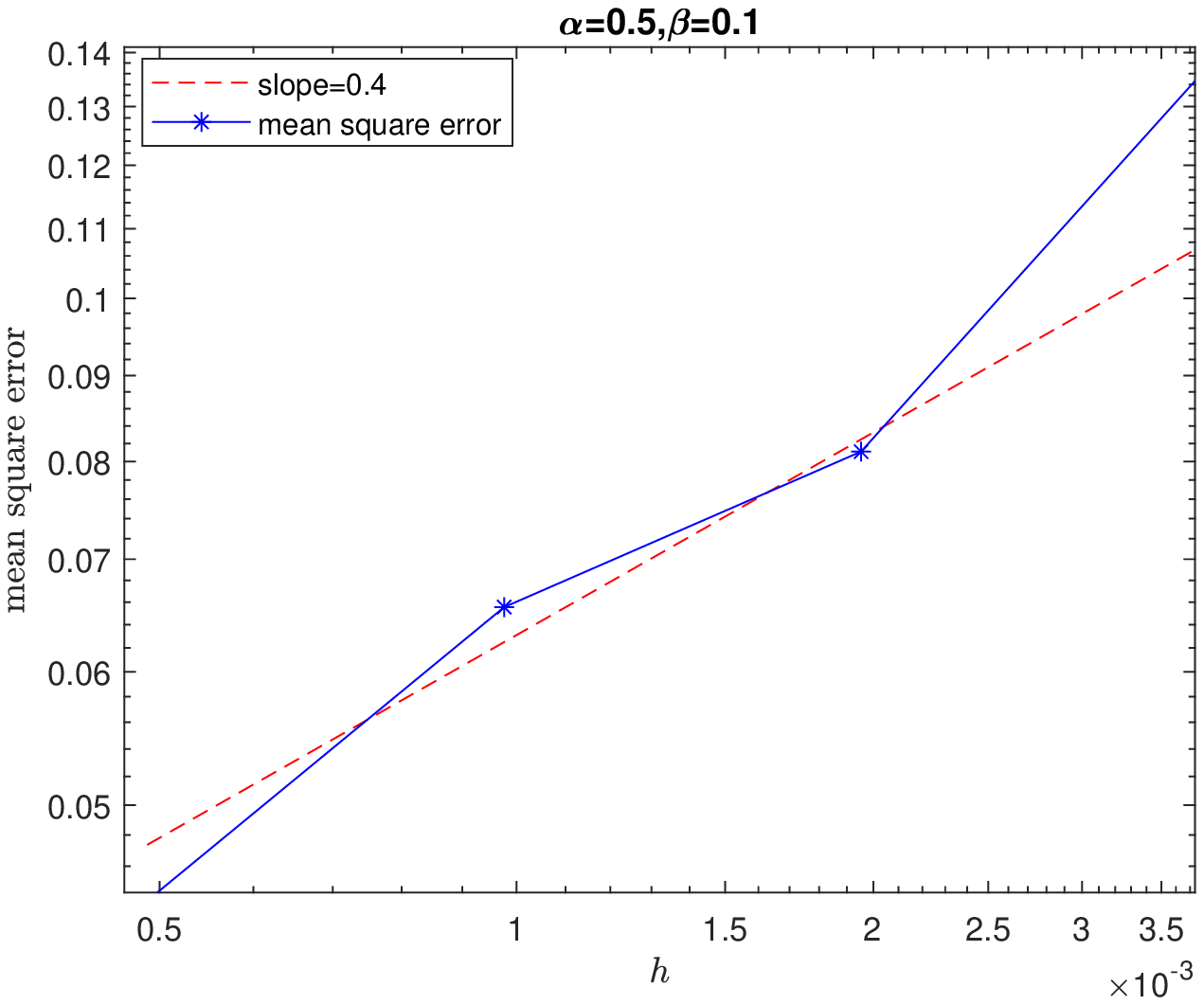}
\caption{Mean square error of the $\theta$-EM method with $\theta=0.5$ for \eqref{ex1}.}
\label{1}
\end{figure}

\begin{figure}[!ht]
\centering
\includegraphics[height=8cm,width=12cm]{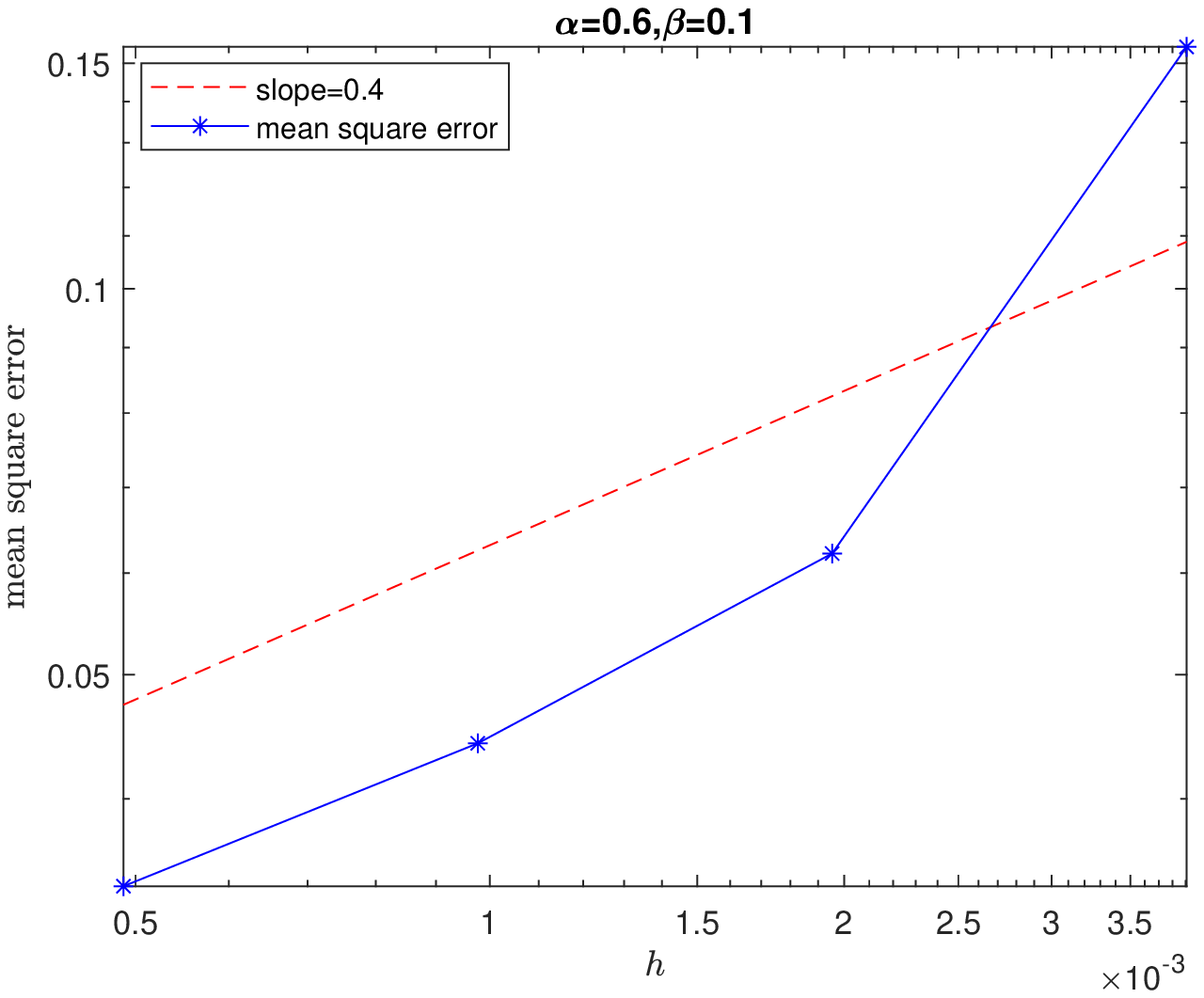}
\caption{Mean square error of the $\theta$-EM method with $\theta=0.5$ for \eqref{ex1}.}
\label{2}
\end{figure}

\begin{figure}[!ht]
\centering
\includegraphics[height=8cm,width=12cm]{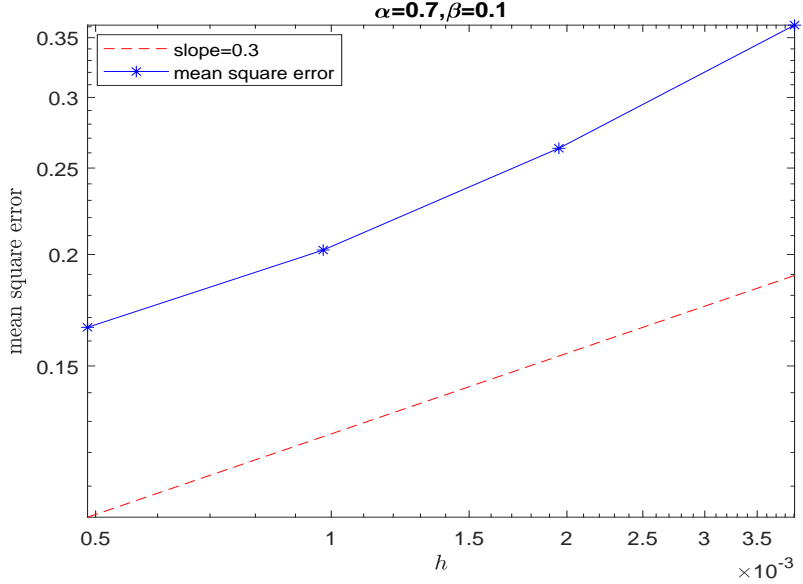}
\caption{Mean square error of the $\theta$-EM method with $\theta=1$ for \eqref{ex1}.}
\label{3}
\end{figure}
\section{Numerical experiments}
\begin{example}\label{example1}
We consider the following example
\begin{equation}\label{ex1}
X(t)=1+\int_{0}^{t}(t-s)^{-\alpha}\sin(X(s))ds+\frac{1}{2}\int_{0}^{t}(t-s)^{-\beta}(\cos(X(s))+2)dW_{s},~~~t\in [0, 1].
\end{equation}
\end{example}
Due to appearance of the  singularity in the
above stochastic integral, it is  difficult for us to illustrate  the convergence rate of the Milstein-type scheme. Here, we only check    the order of convergence of the $\theta$-EM scheme numerically. We regard the numerical solution yielded by small stepsize $h^{*}=2^{-13}$
 as 'exact' solution. Moreover, the corresponding numerical solutions are generated by four different stepsizes
  $h=4h^{*}, 8h^{*}, 16h^{*}$ and $32h^{*}$, respectively. The mean square errors of $\theta$-EM
  scheme  are calculated at the terminal time $t_{N}=T=1$ by
  $$
  e=\bigg{(}\frac{1}{M}\sum_{i=1}^M |X_{T}^{(i)}-X_{N}^{(i)}|^{2}\bigg{)}^{\frac 12},
  $$
   where the expectation is approximated by averaging over $M=1000$ Brownian sample paths. The mean square errors are plotted in Figs. \ref{1},
   \ref{2}, \ref{3}  in log-log scale. In these plots, the reference lines and error lines are parallel to each other,  revealing the
   convergence rate of $\theta$-EM scheme is $\min\{1-\alpha,\frac{1}{2}-\beta\}$.
\section{Conclusion}
Our aim in this work is to investigate a $\theta$-Euler-Maruyama scheme and a Milstein type scheme
for SVIEs with weakly singular kernels.
 Since It\^{o} formula is not available, the classical  proof techniques are no longer used.
 Our new strategy is based on the Taylor formula, classical fractional calculus, and discrete and
continuous typed Gronwall inequalities with weakly singular kernels. The convergence rates
of these schemes have been given by a technical
  analysis on   the equation the solution satisfies. And the convergence results of $\theta$-Euler-Maruyama
  scheme are demonstrated through some numerical experiments. In forthcoming works, we study a Milstein-
type method for SVIEs with diagonal and boundary singularities of the kernel (cf. \cite{Pedas2006,Kolk2009,Dai2020doubly}).
  Our future work is to verify
 whether the order of convergence is optimal. In addition, we will study how to effectively model
 the multiple stochastic singular integrals.
\bibliographystyle{elsarticle-num}
\bibliography{reference}
\end{document}